\documentclass[12pt,letterpaper,reqno]{amsart}
\usepackage{bm}
\usepackage{mathrsfs}
\usepackage{latexsym}
\usepackage{epsfig}
\usepackage{color}
\usepackage{comment}
\usepackage{float}
\usepackage{tikz,tikz-cd}
\usepackage{amsmath,amsfonts,amsthm,amssymb,amscd,hyperref}
\usepackage[all,dvips,arc,curve,frame]{xy}

\addtolength{\textwidth}{2cm} \addtolength{\hoffset}{-1cm}
\addtolength{\marginparwidth}{-1cm} \addtolength{\textheight}{2cm}
\addtolength{\voffset}{-1cm}

\newcommand{\supp}{\operatorname{supp}}
\newcommand{\A}{\mathcal{A}}
\newcommand{\Z}{\mathbb{Z}}
\newcommand{\B}{\mathscr{B}}

\newcommand{\N}{\mathbb{N}}
\newcommand{\T}{\overline{T}}

\begin{document}

\newtheorem{thm}{Theorem}[section]
\newtheorem{conj}[thm]{Conjecture}
\newtheorem{cor}[thm]{Corollary}
\newtheorem{lem}[thm]{Lemma}
\newtheorem{prop}[thm]{Proposition}
\newtheorem{exa}[thm]{Example}
\newtheorem{defi}[thm]{Definition}
\newtheorem{rem}[thm]{Remark}

\numberwithin{equation}{section}

\title[Intrinsic ergodicity of the squarefree flow]{Uniqueness of the measure of maximal entropy for the squarefree flow} 
\author{Ryan Peckner}

\maketitle

\begin{abstract}
The squarefree flow is a natural dynamical system whose topological and ergodic properties are closely linked to the behavior of squarefree numbers. We prove that the squarefree flow carries a unique measure of maximal entropy and express this measure explicitly in terms of a skew-product of a Kronecker and a Bernoulli system. Using this characterization and a number-theoretic argument, we then show that the unique maximum entropy measure fails to possess the Gibbs property.    
\end{abstract}

\section{Introduction}\label{sec: intro}
One of the most important objects in analytic number theory is the M\"obius function, defined for positive integers $n$ by
\[ \mu(n) = \left\{
\begin{array}{cl}
1 &\ \text{if }n=1 \\
0 &\ \text{if }n\text{ is not squarefree}\\
(-1)^{r} &\ \text{if }n=p_{1}\cdots p_{r}\text{ is a product of $r$ distinct primes}.
\end{array}\right. 
\]
It is well-known that if the sequence $\left(\mu(n)\right)_{n\geq 1}$ behaves randomly in the sense that
\[
\sum_{n\leq N}\mu(n) = O_{\epsilon}\left(N^{1/2 + \epsilon}\right) \ \text{ for any }\epsilon>0,
\]
then the Riemann hypothesis is true. Less quantitative but richer reflections of the chaotic behavior of $\mu$ are captured by instances of the ``M\"obius Randomness Law'', see for example \cite{IK}.

Sarnak has recently developed an approach to the idea of M\"obius randomness based on the study of certain dynamical systems \cite{Sar}. Let $\Omega^{(3)} = \{-1,0,1\}^{\mathbb{N}}$ with the product topology, and let $\sigma:\Omega^{(3)}\to\Omega^{(3)}$ be the left shift defined by $(\sigma x)_{n} = x_{n+1}$. $(\Omega^{(3)},\sigma)$ is a topological dynamical system, that is, a pair $(X,S)$ where $X$ is a compact metric space and $S: X\to X$ is continuous and surjective. 

The M\"obius function defines a point $\mu = \left(\mu(1), \mu(2), \mu(3),\dots\right)\in\Omega^{(3)}$, and we let $\mathcal{M}$ be the closure in $\Omega^{(3)}$ of the orbit $\{\sigma^{k}\mu : k\geq 0\}$. Then the `M\"obius flow' $\mathcal{M}$ is a closed, $\sigma$-invariant subset of $\Omega^{(3)}$, but due to the mysterious behavior of $\mu$ one cannot say much else about it. The purpose of this paper is to study a related system about which we can say a great deal. 

Let $\Omega^{(2)} = \{0,1\}^{\mathbb{N}}$ and let $\phi: \Omega^{(3)}\to\Omega^{(2)}$ be the squaring map $(x_{n})\mapsto\left(x_{n}^{2}\right)$. Observe that $\phi$ is continuous, surjective, and intertwines the shift maps on the two spaces (this is an example of a \emph{factor map} between topological dynamical systems). Let $\mathcal{S}$ be the closure in $\Omega^{(2)}$ of the orbit of $\phi(\mu) = \left(\mu^{2}(n)\right)_{n\geq1}$ under $\sigma$. Since $\mu^{2}(n)$ is the indicator function of the set of squarefree numbers, we call $(\mathcal{S},\sigma)$ the squarefree flow.

Naturally, the dynamical structure of $\mathcal{S}$ is strongly tied to the statistical properties of squarefree numbers. In this connection, Sarnak uses the well-known squarefree sieve (cf. \cite{Tsa}) to produce a certain ergodic measure $\nu$ of zero entropy on $\Omega^{(2)}$ whose support is $\mathcal{S}$ (for the definitions of terms from ergodic theory see \cite{Wal}). This allows him in turn to prove the following: define the \emph{support} of a sequence $x = (x_{n})\in\Omega^{(2)}$ to be the set $\supp(x)\subset\mathbb{N}$ of those $n$ such that $x_{n}=1$. We call a subset $A\subset\mathbb{N}$ \emph{admissible} if for every prime $p$ the reduction $A\bmod{p^{2}}$ is a proper subset of $\mathbb{Z}/p^{2}\mathbb{Z}$. We then call a sequence $(x_{n})\in\Omega^{(2)}$ admissible if its support is an admissible subset of $\mathbb{N}$. As, for every prime $p$, the support of $(\mu^{2}(n))_{n\geq 1}$ is missing the residue class $0\bmod{p^{2}}$, we see that $\mu^{2}$ is admissible in this sense.

Let $\mathcal{A}\subset\Omega^{(2)}$ be the set of all admissible sequences. It's easy to show that $\mathcal{A}$ is a closed, shift invariant subset of $\Omega^{(2)}$. Using the aforementioned measure $\nu$ (and in particular the fact that its support is $\mathcal{S}$), Sarnak is able to show that in fact $\mathcal{A} = \mathcal{S}$. Though this may seem surprising at first, it arises quite naturally from the shape of the measure $\nu$, which in turn is constructed from the main term of the squarefree sieve formula. 

The ergodicity of $\nu$ implies that the subset $\mathcal{A}_{1}$ of $\mathcal{A}$ consisting of sequences whose support omits only one residue class $\mod{p^{2}}$ for every $p$ has full $\nu$-measure. It's then not hard to show that the map which sends such a sequence to the point in $\mathcal{K} := \prod_{p}\mathbb{Z}/p^{2}\mathbb{Z}$ whose $p$-th coordinate is the residue class omitted by the sequence $\bmod{p^{2}}$ intertwines the shift on $\mathcal{A}$ with the translation $T_{(-1,-1,-1,\dots)}$ on $\mathcal{K}$ by the element $(-1,-1,-1,\dots)$ (note that this map is Borel measurable but not continuous). Moreover, the pushforward of $\nu$ under this map is precisely the mass one Haar measure $m$ on the compact group $\mathcal{K}$, so we have a factor map of \emph{measure-preserving} dynamical systems $(\mathcal{A}_{1},\sigma,\nu)\to(\mathcal{K},T_{(-1,-1,-1,\dots)},m)$. Subsequently, Cellarosi and Sinai \cite{CS} used spectral theory techniques to prove that this map is in fact an isomorphism of 
measure-preserving dynamical systems (it is 
however very far from being an isomorphism of \emph{topological} dynamical systems). 

While this tells the whole story for the measure $\nu$, there are other invariant measures on $\mathcal{A}$ that should be taken into account. Specifically, as stated by Sarnak (and proven in this paper), the system $(\mathcal{A},\sigma)$ has topological entropy $(6/\pi^{2})\log2$ (it's no coincidence that $6/\pi^{2}$ is the density of the squarefree numbers in $\mathbb{N}$!). The variational principle (e.g. \cite{Wal} 8.2) states that the topological entropy of a topological dynamical system is the supremum of the measure entropies over all its invariant probability measures, and it is known that any subshift system possesses at least one \emph{measure of maximal entropy}, i.e. an invariant probability measure whose measure entropy equals the topological entropy of the system. Thus there exists such a measure for $\mathcal{A}$. 

While (ergodic) measures of maximal entropy always exist for irreducible subshifts (this is not the case for all topological dynamical systems), the number of such measures may be arbitrarily large, even infinite (see \cite{Hay} for examples). In keeping with the general principle in ergodic theory that a scarcity of measures is more meaningful than an abundance of them, it is an important problem to determine when there is only one measure of maximal entropy on a given topological dynamical system. 

The aim of this paper is to prove the following.
\begin{thm}
The squarefree flow $\mathcal{A}$ possesses a unique measure of maximal entropy. 
\label{thm: main}
\end{thm}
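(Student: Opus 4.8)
The plan is to exhibit the measure of maximal entropy explicitly as the skew product advertised in the abstract, and then to prove its uniqueness by an entropy bound that is rigid in its equality case. For a point $x \in \A$ and a prime $p$, let $E_p(x) \subseteq \Z/p^2\Z$ denote the (nonempty, by admissibility) set of residue classes omitted by $\supp(x)$ modulo $p^2$. Shifting replaces $E_p(x)$ by $E_p(x)-1$, so the assignment $x \mapsto (E_p(x))_p$ is a Borel factor map onto a rotation on a product of finite sets, and in particular is a \emph{zero-entropy} factor. Restricted to the invariant Borel set $D = \{x : |E_p(x)| = 1 \text{ for all } p\}$ it becomes the measurable factor map $\pi : D \to \mathcal{K} = \prod_p \Z/p^2\Z$ recorded in the introduction, intertwining $\sigma$ with the rotation $R$ by $(-1,-1,\dots)$.

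I would first construct the candidate $\mu_{\max}$: draw $k \in \mathcal{K}$ from Haar measure, let $S_k = \{n \in \N : n \not\equiv k_p \bmod p^2 \text{ for all } p\}$, and on the positions of $S_k$ place independent fair coin flips, all other coordinates being $0$. By construction this measure is $\sigma$-invariant and supported on $\A$, and since $S_k$ has density $\prod_p(1 - 1/p^2) = 6/\pi^2$ for every $k$, a direct block-entropy computation (using that the Haar base is Kronecker, hence contributes nothing) gives $h_{\mu_{\max}}(\sigma) = (6/\pi^2)\log 2$. As this matches the stated topological entropy, $\mu_{\max}$ is a measure of maximal entropy; one further checks, via its description as a Bernoulli fibre over an ergodic rotation, that $\mu_{\max}$ is ergodic.

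For uniqueness it suffices to treat an ergodic measure $m$ of maximal entropy, since the ergodic components of an arbitrary maximal measure are almost surely maximal. Fix such an $m$; ergodicity makes each $c_p := |E_p(x)|$ almost surely constant. The forced-zero coordinates of $x$ are exactly the union over $p$ of the $c_p$ arithmetic progressions in $E_p(x) \bmod p^2$, so by the Chinese remainder theorem the free coordinates have density $\prod_p(1 - c_p/p^2)$. Since $x$ is determined by its restriction to the free coordinates together with the zero-entropy data $(E_p(x))_p$, I would bound
\[
h_m(\sigma) \le \Big(\prod_p (1 - c_p/p^2)\Big)\log 2 \le \frac{6}{\pi^2}\log 2,
\]
the second inequality because $c_p \ge 1$ forces each factor to be at most $1 - 1/p^2$. (Applied across all ergodic measures, this inequality also furnishes the upper bound $h_{\mathrm{top}}(\A) \le (6/\pi^2)\log 2$.) Maximality forces both inequalities to be equalities: the right-hand equality gives $c_p = 1$ for every $p$, whence $m(D) = 1$ and $\pi_* m$ is a well-defined $R$-invariant measure on $\mathcal{K}$; and the left-hand equality forces the restriction of $x$ to the free coordinates to be a Bernoulli$(1/2)$ process conditionally on $(E_p(x))_p$.

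The conclusion then assembles as follows. The rotation $R$ on $\mathcal{K}$ is by an element whose powers are dense (the diagonal image of $\Z$ is dense in $\prod_p \Z/p^2\Z$), so $(\mathcal{K},R)$ is uniquely ergodic and $\pi_* m = \nu_{\mathrm{Haar}}$. Combined with the conditional Bernoulli structure from the equality analysis, this identifies $m$ with $\mu_{\max}$, and uniqueness follows. I expect the main obstacle to be the rigidity step: turning the soft bound $h_m(\sigma) \le (\text{density of free coordinates})\log 2$ into a statement precise enough to conclude, in the equality case, both that $c_p \equiv 1$ and that the free coordinates carry the independent fair-coin process. This requires handling the conditional (relative) entropy over the merely measurable, non-continuous factor $(E_p(x))_p$ with care — in effect an Abramov–Rokhlin analysis in which the base has zero entropy and the fibre entropy is pinned to its maximum only by the uniform independent law.
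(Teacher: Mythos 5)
Your outline runs parallel to the paper's actual proof: the reduction to $c_p\equiv 1$ (Lemma \ref{lem: mme support} together with the computation $h_{\text{top}}(\mathcal{C}_I)=\log 2\prod_p(1-i_p/p^2)$), the candidate measure (your Haar-plus-fair-coins construction is exactly the paper's $\iota_*(m\times\omega_{(1/2,1/2)})$), and the use of unique ergodicity of the rotation all appear there, and those parts of your proposal are sound. The genuine gap is the step you yourself call ``the main obstacle'': showing that maximality of entropy forces the free coordinates to be i.i.d.\ fair coins \emph{conditionally on} --- in particular, independently of --- the base. This is not a technicality to be handled ``with care''; it is the entire content of the theorem, and the naive version of your argument fails. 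If one only tracks the joining of the zero-entropy rotation factor with the fiber marginal $\eta$, then the two inequalities $h_\eta \le h_m \le h_{\text{base}}+h_\eta = h_\eta$ show that \emph{every} joining of the zero-entropy base with the Bernoulli $(1/2,1/2)$ system attains the maximal entropy. So entropy maximality can identify the fiber marginal (as the unique measure of maximal entropy of the full $2$-shift) but can never, by itself, single out the product joining. The paper supplies the missing input in its final theorem: Furstenberg disjointness, i.e.\ a zero-entropy system is disjoint from a Bernoulli system, which is what forces $\mu=\overline{m}\times\omega_{(1/2,1/2)}$.

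There is also a structural point your displayed inequality glosses over: the free-coordinate sequence is not a factor of $(\A,\sigma,m)$, because one application of $\sigma$ advances the free-coordinate word only when position $1$ is free and fixes it otherwise; it is a time-changed (induced) process. The paper handles this by inducing on the set $Y$ where position $1$ is free, proving (via the insertion map) that $(Y,\sigma_Y)$ is Borel isomorphic to the induced transformation on $G_1^c\times\Omega^{(2)}$, where the skew product literally becomes the direct product $\overline{T_1}\times\sigma$ (Proposition \ref{prop: product}), and then transferring entropy back and forth with Abramov's formula $h_{\mu_Y}(\sigma_Y)\mu(Y)=h_\mu(\sigma)$ and the bijection between invariant measures and induced invariant measures. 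Your route could in principle avoid disjointness by a genuinely conditional argument --- conditioning block entropies on the base $\sigma$-algebra makes the forced coordinates deterministic, and a stationarity/superadditivity argument can upgrade the asymptotic equality $\lim_N N^{-1}H(x_1\cdots x_N\mid\text{base})=(6/\pi^2)\log 2$ to exact conditional uniformity and independence at every finite stage --- but that argument, together with the Abramov--Rokhlin/Ledrappier--Walters formula needed to justify $h_m(\sigma)\le h_m(\sigma\mid\text{base})$ over a merely measurable factor, is precisely what your proposal defers rather than proves. As written, it is a correct plan whose decisive step is absent.
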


The essence of our proof is the construction of a Borel measurable map (eq. (\ref{eq: insertion}))
\[
 \iota: \left(\prod_{p}\Z/p^{2}\Z\right)\times\Omega^{(2)}\to \mathcal{A}
\]
such that $\iota\circ T = \sigma\circ\iota$, where $T$ is a certain skew-product transformation on $\left(\prod_{p}\Z/p^{2}\Z\right)\times\Omega^{(2)}$. This map codifies the idea of constructing a sequence with admissible support by inserting zeros into an arbitrary sequence in $\Omega^{(2)}$ along the residue classes described by an element of $\prod_{p}\Z/p^{2}\Z$. 

Once this factor map has been constructed, the problem effectively reduces to proving the intrinsic ergodicity of the skew-product transformation $T$. Measures of maximal entropy for precisely such skew-products have been studied in great detail in \cite{MarNew}. As their results are much more general than the case treated here, we have opted to give a direct explication of their proof in the case of interest to us, where the base of the skew product is uniquely ergodic with entropy zero. The key ideas of this argument - namely, inducing the skew product to a subset where it decomposes as a direct product, then using Abramov's formula to compare entropies - are entirely borrowed from their work. Our contribution lies in the use of the factor map $\iota$ above to compare the complicated system $\mathcal{A}$ with a more transparent skew-product system.

This construction yields an explicit formula for the effect of the unique measure of maximal entropy on the cylinder set corresponding to any word in $\A$, and by using this formula we are able to show in Section \ref{sec: gibbs} that it is not a Gibbs measure, in contrast to many familiar classes of intrinsically ergodic systems. 

We note that this method also applies to the $\mathscr{B}$-free shifts studied in \cite{ELR}, which are defined as follows. Fix an infinite set $\B = \{b_{1},b_{2},b_{3},\dots\}\subset\N^{*}$ satisfying the following properties:
 \begin{enumerate}
  \item $\forall 1\leq r < r'$, $b_{r}$ and $b_{r'}$ are relatively prime
  \item $\sum_{r\geq1}b_{r}^{-1} < \infty$.
 \end{enumerate}
 
 Integers with no factors in $\B$ are called $\B$-free. We define the characteristic function
 \[
  \nu(n) = \begin{cases}
             0 & \mbox{ if } n \mbox{ is not $\B$-free}\\
             1 & \mbox{ if } n \mbox{ is $\B$-free}. \\
            \end{cases}
 \]
 
 The $\B$-free flow $X$ is then the closure of the point $(\nu(n))_{n\geq1}\in\Omega^{(2)} = \{0,1\}^{\N^{*}}$ under the left shift, and can be identified as the set of sequences the reduction of whose support modulo each $b_{r}$ is a proper subset of $\Z/b_{r}\Z$ (\cite{ELR} Cor. 4.2). Thus the squarefree flow is the $\mathscr{B}$-free flow for $\mathscr{B} = \{p^{2} : p\text{ prime} \}$, and as no special properties of squares of primes are used in our argument for intrinsic ergodicity aside from their mutual relative primality, the proof carries over to the general case essentially verbatim. In contrast, our argument regarding the Gibbs property is quite specific to squares of primes, and it is unclear whether this may be extended to general sets $\mathcal{B}$. 
 
 While this work was under review, the preprint \cite{KLW} appeared, in which a detailed study of invariant measures for $\B$-free systems is presented, including a proof of our main Thm. \ref{thm: main} for all such systems. Their approach, while substantially different from ours, is also based on relating an invariant measure on the squarefree flow to the Haar measure on a compact monothetic group and the $(1/2,1/2)$ Bernoulli measure. 
 
 I wish to thank my advisor Peter Sarnak for many helpful conversations, as well as Manfred Einsiedler for pointing out the relevance 
of skew products in connection with the insertion map mentioned above. I am also indebted to an anonymous referee for many detailed suggestions and for pointing out several errors in earlier versions of this paper. 

\section{Induced transformations}
\label{sec:induced}

We first recall some generalities on induced transformations, as they are essential to our argument. Suppose $T: X\to X$ is a Borel isomorphism of a compact metric space, and $Y\subset X$ is a recurrent Borel subset, meaning $Y\subseteq \cup_{n=1}^{\infty}T^{-n}Y$. Let $r : Y\to \mathbb{N}$ be the return-time map, so $r(x) = \min\{n\geq 1 : T^{n}x\in Y\}$, and define the induced transformation $T_{Y}: Y\to Y$ by $T_{Y}x = T^{r(x)}x$. Given a $T$-invariant Borel probability measure $\mu$ on $X$ such that $\mu(Y) > 0$, we may define a $T_{Y}$-invariant measure $\mu_{Y}$ on $Y$ by $\mu_{Y}(E) = \mu(E\cap Y)/\mu(Y)$. We then have the following well-known facts (cf. \cite{Zwei},\cite{Th}). 
\begin{enumerate}
  \item Suppose $\mu$ is a $T$-invariant Borel measure on $X$ such that $X = \cup_{n=0}^{\infty}T^{-n}Y$ up to a set of $\mu$-measure zero. Then $\mu_{Y}$ is ergodic for $T_{Y}$ if and only if $\mu$ is ergodic for $T$. 
  \item We may calculate the entropy of $\mu_{Y}$ as $h_{\mu_{Y}}(T_{Y})\mu(Y) = h_{\mu}(T)$.
  \item Let $\nu$ be a $T_{Y}$-invariant Borel probability measure on $Y$. Then there exists a $T$-invariant Borel probability measure $\mu$ on $X$, concentrated on $\cup_{n=0}^{\infty}T^{-n}Y$, such that $\nu = \mu_{Y}$.  
\end{enumerate}
Fact (2) is Abramov's formula \cite{Ab}. Facts (1) and (3) may be easily proven: (1) follows because for any $n\geq 1$, $\{x\in Y : r(x) = n\}\cap T^{-n}(E\cap Y) = \{r(x) = n\}\cap T^{-n}E$, so that $E\cap Y$ is invariant for $T_{Y}$ whenever $E$ is invariant for $T$. For (3), the measure $\mu$ can be given explicitly as
\[
 \mu(E) = \sum_{n\geq 0}\nu(\{r(x) > n\}\cap T^{-n}E).
\]
The $T$-invariance of $\mu$ follows by writing
\[
 \mu(T^{-1}E) = \sum_{n\geq 1}\nu(\{r(x) > n\}\cap T^{-n}E) + \sum_{n\geq 1}\nu(\{r(x) = n\}\cap T^{-n}E)
\]
and using the $T_{Y}$-invariance of $\nu$. It follows that the assignment $\mu\mapsto \mu_{Y}$ is a bijection between $T$-invariant measures on $X$ with $\mu(Y) > 0$ and $T_{Y}$-invariant measures on $Y$. 

\section{The Squarefree Flow}

We now turn to our situation. We will be dealing with Borel measurable maps that are not continuous, and so must be somewhat careful in our definitions. Following \cite{MarNew}, if $S$ is a Borel measurable isomorphism of a compact metric space, we define the topological entropy
\begin{equation}
 h(S) = \sup\{h_{\mu}(S) : \mu\mbox{ is an $S$-invariant Borel probability measure}\}.
\label{eq: entropy}
\end{equation}
When $S$ is a homeomorphism, this agrees with the usual definition of $h_{\text{top}}(S)$ by the variational principle. If the $\sup$ is achieved by a unique measure, $S$ is called intrinsically ergodic. 

Define $G = \prod_{p}\Z/p^{2}\Z$, and define the translation function $T_{1} : G\to G$ by $T_{1}(g) = (g_{1} -1,g_{2} - 1, \dots)$. Let $G_{1} = \{g\in G : g_{p}\equiv 1\bmod{p^{2}}\mbox{ for some } p\}$, and define the transformation $T: G\times \Omega^{(2)}\to G\times \Omega^{(2)}$ by 
\[
 T(g,y) = \begin{cases}
           (T_{1}g,y) & \mbox{ if } g\in G_{1} \\
           (T_{1}g,\sigma y) & \mbox{ if not},\\
          \end{cases}
\]
where $\sigma$ is the left shift on $\Omega^{(2)}$. $T$ is a Borel isomorphism of $G\times\Omega^{(2)}$, but is not continuous since $G_{1}$ is open and not closed in $G$. Observe that we can describe $T$ as a skew-product transformation: define the Borel measurable function $\psi = 1 - \chi_{G_{1}} : G\to\{0,1\}$, where $\chi_{G_{1}}$ is the indicator function of $G_{1}$. Then 
\begin{equation}
T(g,y) = (T_{1}g, \sigma^{\psi(g)}y).
\label{eq: skew prod}
\end{equation}
Let $X$ be the squarefree flow as defined above. The key to our argument is an ``insertion map'' $\iota: G\times\Omega^{(2)}\to X$ that allows us to realize $X$ in terms of a more transparent system. Let's describe the motivation for this construction. A sequence $x\in\Omega^{(2)}$ belongs to $X$ if and only if $x$ omits at least one arithmetic progression modulo $p^{2}$ for all primes $p$, i.e. if and only if there exists $a\in\{1,2,\dots,p^{2}\}$ such that $x_{a +\ell p^{2}} = 0$ for any $\ell\geq0$. Outside of these arithmetic progressions, however, the entries of $x$ may be chosen with complete freedom. Thus, suppose we begin with an element $g\in G$ and \emph{any} sequence $x'\in\Omega^{(2)}$. To define a sequence in $X$ from this information, we start reading the entries of $x'$ until we reach a position congruent to one of the $g_{p}\bmod{p^{2}}$, where we insert a zero. We then continue to read $x'$ from where we left off until we reach another position congruent to some (likely to be different) $g_{
q}\bmod{q^{2}}$, 
where we again insert a zero, and so on. For example, let's carry out this process with respect to only one prime $p = 2$. Suppose we are given the residue class $3\pmod{p^{2}=4}$ and the sequence $x'\in\Omega^{(2)}$ shown below. Then modifying $x'$ as just described will yield the indicated sequence $x$:
\begin{align*}
x' & = 01110110101101\cdots \\
x & = 01\underbar{0}110\underbar{0}110\underbar{0}101\underbar{0}101\underbar{0}\cdots
\end{align*}
where the underlined zeros have been inserted to force $x$ to omit the progression $3\bmod{4}$. To formalize this, observe for instance that the fifth entry of $x$ equals the fourth entry of $x'$, since there is exactly one $n\leq 5$ with $n\equiv 3\bmod{4}$, and $5 - 1= 4$. At the same time, the eigth entry of $x$ equals the sixth entry of $x'$, since now there are two $n\leq 8$ with $n\equiv 3\bmod{4}$, and $8 - 2 = 6$.

Thus, for each $g\in G$ define the function $\alpha_{g}:\N\to\N$ by 
\[
\alpha_{g}(m) = |\{1\leq n\leq m : n\equiv g_{p}\bmod{p^{2}}\text{ for some } p\}|.
\]
Now, for $(g,y)\in G\times\Omega^{(2)}$, let $\iota(g,y)\in X$ be the sequence defined by
\begin{equation}
 \iota(g,y)_{m} = \begin{cases}
                       0 & \mbox{ if }m\equiv g_{p}\bmod{p^{2}}\mbox{ for some }p \\
                       y_{m-\alpha_{g}(m)} & \mbox{ if not}. \\
                      \end{cases}
\label{eq: insertion}
\end{equation}

$\iota$ is clearly surjective, but not continuous, since one must read the entire infinite sequence $g\in G$ before deciding whether to insert a zero at a given position $m$. 

\begin{prop}
 $\iota$ is Borel measurable, and we have $\iota\circ T = \sigma\circ\iota$.  
\end{prop}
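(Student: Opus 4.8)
The plan is to treat the two assertions in turn: first Borel measurability, by a coordinatewise reduction, and then the intertwining relation, by a direct computation at each position. For measurability, since the Borel $\sigma$-algebra on $\Omega^{(2)}$ is generated by the coordinate cylinders, it suffices to show that for each fixed $m\in\N$ the map $(g,y)\mapsto\iota(g,y)_m$ is Borel. The first observation is that the ``zeroing set'' $Z_m = \{g\in G : m\equiv g_p\bmod p^{2}\text{ for some }p\}$ is a countable union over $p$ of sets each depending only on the single coordinate $g_p$; as the factors $\Z/p^{2}\Z$ are finite and discrete, these are clopen cylinders, so $Z_m$ is open and in particular Borel. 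On $Z_m\times\Omega^{(2)}$ the coordinate is identically $0$.

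On the complement the value is $y_{m-\alpha_g(m)}$, so the only subtlety is the dependence of the index on $g$. Here I would write $\alpha_g(m) = \sum_{n=1}^{m}\chi_{Z_n}(g)$, a finite sum of indicators of the Borel sets $Z_n$, hence a Borel function of $g$ with values in $\{0,\dots,m\}$. Partitioning $\{g\notin Z_m\}$ into the Borel level sets $\{\alpha_g(m)=k\}$, on each piece the index $m-k$ is constant and $(g,y)\mapsto y_{m-\alpha_g(m)}$ restricts to the coordinate projection $y\mapsto y_{m-k}$, which is Borel. (Note that $g\notin Z_m$ gives $\chi_{Z_m}(g)=0$, so $\alpha_g(m)\le m-1$ and the index $m-\alpha_g(m)\ge 1$ is a genuine positive integer.) Combining the finitely many pieces shows $(g,y)\mapsto\iota(g,y)_m$ is Borel, and hence so is $\iota$.

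For the intertwining identity I would verify $\iota(T(g,y))_m = \iota(g,y)_{m+1} = (\sigma\iota(g,y))_m$ for every $m$. Using $T(g,y) = (T_{1}g,\sigma^{\psi(g)}y)$, the zero-case matches at once: $\iota(T(g,y))_m = 0$ iff $m\equiv g_p-1\bmod p^{2}$ for some $p$, i.e.\ iff $m+1\equiv g_p\bmod p^{2}$ for some $p$, which is exactly the condition $\iota(g,y)_{m+1}=0$. The non-zero case reduces to a counting identity: shifting the summation index shows $\alpha_{T_{1}g}(m)$ counts the zeroing positions of $g$ lying in $\{2,\dots,m+1\}$ while $\alpha_g(m+1)$ counts those in $\{1,\dots,m+1\}$, so the two differ only according to whether position $1$ is a zeroing position for $g$, i.e.\ by $\chi_{G_1}(g)$, giving $\alpha_g(m+1) = \alpha_{T_{1}g}(m) + \chi_{G_1}(g)$. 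Since $\psi = 1-\chi_{G_1}$, this rearranges to $(m+1)-\alpha_g(m+1) = \bigl(m-\alpha_{T_{1}g}(m)\bigr)+\psi(g)$, whence $y_{(m+1)-\alpha_g(m+1)} = (\sigma^{\psi(g)}y)_{m-\alpha_{T_{1}g}(m)} = \iota(T(g,y))_m$, as required.

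The main obstacle is precisely this last identity, which is the conceptual heart of the construction: the exponent $\psi(g)$ of the fibre shift must absorb exactly the one extra zero inserted at the front when $g\in G_1$. This is why a skew product rather than a direct product is forced---the shift on the $\Omega^{(2)}$-fibre fires only when no zero is inserted at position $1$, the event $\psi(g)=1$. The remaining work is routine bookkeeping, together with the verification (as in the measurability step) that all indices into $y$ remain positive integers throughout.
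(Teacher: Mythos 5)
Your proof is correct, and it diverges from the paper's in an interesting way on the measurability half while essentially matching it on the intertwining half. For the commutation relation the paper does the same index bookkeeping you do, but split into the two cases $g\in G_{1}$ and $g\notin G_{1}$, verifying $\alpha_{T_{1}g}(m)=\alpha_{g}(m)-1$ in the first case and $\alpha_{T_{1}g}(m)=\alpha_{g}(m)$ in the second; your single identity $\alpha_{g}(m+1)=\alpha_{T_{1}g}(m)+\chi_{G_{1}}(g)$, combined with $\psi=1-\chi_{G_{1}}$, packages both cases into one computation---a cleaner presentation of the same argument. For measurability, however, your route is genuinely different. You argue coordinatewise: each zeroing set $Z_{n}$ is open, $\alpha_{g}(m)=\sum_{n\leq m}\chi_{Z_{n}}(g)$ is a finite sum of Borel indicators, and partitioning $\{g\notin Z_{m}\}$ into the level sets $\{\alpha_{g}(m)=k\}$ reduces the $m$-th coordinate of $\iota$ to coordinate projections on Borel pieces (your parenthetical check that $m-\alpha_{g}(m)\geq 1$ on this set is exactly the point that keeps the indices legal). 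The paper instead introduces partial insertion maps $\iota_{r}$, which insert zeros only along the residue classes of the first $r$ primes, proves each $\iota_{r}$ is continuous, and shows $\iota_{r}\to\iota$ pointwise everywhere, concluding that $\iota$ is Borel as a pointwise limit of continuous maps. Your argument is more elementary and self-contained; the paper's approach yields slightly more information (that $\iota$ is of Baire class one, not merely Borel) and sets up the truncated spaces $X(r)$, which reappear in the paper's later entropy computations. Both are complete proofs of the proposition.
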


We first will prove the equivariance property, then demonstrate the measurability. To prove the desired relation, we must divide into cases dictated by $G_{1}$. Thus, suppose $g\in G_{1}$, and let $y\in\Omega^{(2)}$. Then we have for any $m\geq 1$
\[
 \iota(T(g,y))_{m} = \iota(T_{1}g,y)_{m} = \begin{cases}
                                            0 & \mbox{ if }m + 1\equiv g_{p}\bmod{p^{2}}\mbox{ for some } p \\
                       y_{m-\alpha_{T_{1}g}(m)} & \mbox{ if not}. \\
                      \end{cases}                                            
\]
Observe that if $\not\exists p$ with $m + 1\equiv g_{p}\bmod{p^{2}}$, then since $g\in G_{1}$, we have
\begin{align*}
 \alpha_{T_{1}g}(m) & = |\{1\leq n\leq m : n + 1\equiv g_{p}\bmod{p^{2}} \text{ for some }p\}| \\
  & = |\{1\leq n\leq m : n\equiv g_{p}\bmod{p^{2}} \text{ for some } p\}\backslash\{1\}| \\
    & = \alpha_{g}(m) - 1.
\end{align*}
Therefore, for such $m$ we have 
\[
 y_{m-\alpha_{T_{1}g}(m)} = y_{m - \alpha_{g}(m) + 1}.
\]
At the same time, 
\[
 \sigma(\iota(g,y))_{m} = \iota(g,y)_{m+1} = \begin{cases}
                                              0 & \mbox{ if }m + 1\equiv g_{p}\bmod{p^{2}}\mbox{ for some } p \\
                                              y_{(m+1) - \alpha_{g}(m+1)} & \mbox{ if not}. \\
                                             \end{cases}
\]
In the second case above, we clearly have $\alpha_{g}(m + 1) = \alpha_{g}(m)$; therefore $y_{(m+1) - \alpha_{g}(m + 1)} = y_{m - \alpha_{g}(m) + 1} = \iota(T(g,y))_{m}$. In the first case, it's obvious that $\iota(T(g,y))_{m} = \sigma(\iota(g,y))_{m} = 0$; therefore we have equality for all $m\geq 1$ and so $\iota(T(g,y)) = \sigma(\iota(g,y))$.

Now assume $g\not\in G_{1}$. Then 
\[
\iota(T(g,y))_{m} = \iota(T_{1}g,\sigma y)_{m} = \begin{cases}
                                            0 & \mbox{ if }m + 1\equiv g_{p}\bmod{p^{2}}\mbox{ for some } p \\
                       y_{m-\alpha_{T_{1}g}(m) + 1} & \mbox{ if not}. \\
                      \end{cases}
\]
If $m\geq 1$ and $\not\exists p$ with $m + 1\equiv g_{p}\bmod{p^{2}}$, we have 
\begin{align*}
 \alpha_{T_{1}g}(m) & = |\{1\leq n\leq m : n + 1\equiv g_{p}\bmod{p^{2}} \text{ for some }p\}| \\
  & = |\{1\leq n\leq m : n\equiv g_{p}\bmod{p^{2}} \text{ for some } p\}| \\
    & = \alpha_{g}(m).
\end{align*}
Thus in this case, $y_{m-\alpha_{T_{1}g}(m) + 1} = y_{m - \alpha_{g}(m) + 1}$. On the other hand,
\[
 \sigma(\iota(g,y))_{m} = \iota(g,y)_{m+1} = \begin{cases}
                                              0 & \mbox{ if }m + 1\equiv g_{p}\bmod{p^{2}}\mbox{ for some } p \\
                                              y_{(m+1) - \alpha_{g}(m+1)} & \mbox{ if not}. \\
                                             \end{cases} 
\]
In the second case above, we clearly have $\alpha_{g}(m + 1) = \alpha_{g}(m)$; therefore $y_{(m+1) - \alpha_{g}(m + 1)} = y_{m - \alpha_{g}(m) + 1} = \iota(T(g,y))_{m}$. In the first case, it's obvious that $\iota(T(g,y))_{m} = \sigma(\iota(g,y))_{m} = 0$; therefore we have equality for all $m\geq 1$ and so $\iota(T(g,y)) = \sigma(\iota(g,y))$. So this holds both for $g\in G_{1}$ and $g\not\in G_{1}$, proving the desired equivariance.

To prove the measurability of $\iota$, for each $r\geq1$ define  
\[
 X(r) = \{x\in \Omega^{(2)} : \supp(x) \mbox{ omits at least one residue class }\bmod{p_{i}^{2}}\mbox{ for } i=1,\cdots, r\}.
\]
Note that $X(r)\supset X$ for all $r\geq 1$. For each $r\geq 1$, let $G_{r} = \prod_{i=1}^{r}\Z/p_{i}^{2}\Z$, and let $\pi_{r}: G\times \Omega^{(2)} \to G_{r}\times \Omega^{(2)}$ be the product of projection $G\to G_{r}$ onto the first $r$ coordinates with the identity on $\Omega^{(2)}$. We define the partial insertion map $\phi_{r}: G_{r}\times\Omega^{(2)}\to X(r)$ as follows. For each $g\in G_{r}$ define the function $\alpha_{g}^{r}:\N\to\N$ by 
\[
\alpha_{g}^{r}(m) = |\{n\leq m : n\equiv g_{p_{i}}\bmod{p_{i}^{2}}\text{ for some } i\in\{1,\dots,r\}\}|.
\]
Now, for $(g,y)\in G_{r}\times\Omega^{(2)}$, let $\phi_{r}(g,y)\in X(r)$ be the sequence defined by
\begin{equation}
 \phi_{r}(g,y)_{m} = \begin{cases}
                       0 & \mbox{ if }m\equiv g_{p_{i}}\bmod{p_{i}^{2}}\mbox{ for some }i\in\{1,\dots,r\} \\
                       y_{m-\alpha_{g}^{r}(m)} & \mbox{ if not}. \\
                      \end{cases}
\label{eq: partial}
\end{equation}

$\phi_{r}$ is a surjective map from $G_{r}\times\Omega^{(2)}$ onto $X(r)$. Define $\iota_{r} : G\times\Omega^{(2)}\to X(r)$ to be the composition of $\pi_{r}$ with the partial insertion map $\phi_{r}: G_{r}\times\Omega^{(2)}\to X(r)$; then $\iota_{r}$ is surjective as well. 

\begin{prop}
 $\iota_{r}$ is continuous for each $r\geq 1$, and we have $\iota_{r}\to\iota$ pointwise everywhere on $G\times\Omega^{(2)}$. Therefore, $\iota$ is Borel measurable.  
 \label{prop: pointwise}
\end{prop}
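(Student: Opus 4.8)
The plan is to verify the three assertions in turn, throughout exploiting that the target lies inside the product space $\Omega^{(2)} = \{0,1\}^{\N^*}$, so that a map into it is continuous (respectively Borel) exactly when each of its coordinate functions is.

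For the continuity of $\iota_{r}$ I would fix a position $m\geq 1$ and inspect the coordinate $(g,y)\mapsto \iota_{r}(g,y)_{m}$. By construction this value depends on $g$ only through $g_{p_{1}},\dots,g_{p_{r}}$, which lie in the discrete finite factors $\Z/p_{i}^{2}\Z$; these coordinates determine both whether $m\equiv g_{p_{i}}\bmod p_{i}^{2}$ for some $i\leq r$ and, failing that, the counting number $\alpha_{g}^{r}(m)$. Since $\alpha_{g}^{r}(m)$ counts a subset of $\{1,\dots,m\}$ it is at most $m$, and in the reading branch $m$ is itself uncounted, so $\alpha_{g}^{r}(m)\leq m-1$; hence the index $m-\alpha_{g}^{r}(m)$ is a positive integer at most $m$ and the value depends on $y$ only through $y_{1},\dots,y_{m}$. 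Thus $(g,y)\mapsto \iota_{r}(g,y)_{m}$ factors through a projection onto finitely many coordinates, each living in a discrete space, so it is locally constant and therefore continuous; as this holds for every $m$, the map $\iota_{r}$ is continuous.

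Next I would establish the pointwise convergence. Fix $(g,y)$ and a position $m$; it suffices (again by the product topology, the coordinates taking values in the discrete set $\{0,1\}$) to show that $\iota_{r}(g,y)_{m}=\iota(g,y)_{m}$ for all sufficiently large $r$. The crucial observation is a finiteness statement: the set $N=\{1\leq n\leq m : n\equiv g_{p}\bmod p^{2}\text{ for some prime }p\}$ is contained in $\{1,\dots,m\}$ and hence finite, while the analogous sets $N_{r}$ built from only the first $r$ primes increase to $N$. Choosing $R$ with $N_{R}=N$, one has $N_{r}=N$ for all $r\geq R$; consequently $m\in N_{r}\iff m\in N$, so the insertion test at $m$ has the same outcome for $\iota_{r}$ as for $\iota$, and moreover $\alpha_{g}^{r}(m)=|N_{r}|=|N|=\alpha_{g}(m)$. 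Hence for $r\geq R$ either both maps return $0$ at position $m$, or both return $y_{m-\alpha_{g}(m)}$, and in either case they agree.

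Finally, Borel measurability of $\iota$ follows formally: each coordinate $\iota(\cdot)_{m}$ is the pointwise limit of the continuous, hence Borel, functions $\iota_{r}(\cdot)_{m}$, and a pointwise limit of Borel functions valued in the metric space $\Omega^{(2)}$ is Borel; since $\Omega^{(2)}$ carries the product topology, measurability of every coordinate yields measurability of $\iota$. The step demanding the most care is the pointwise convergence, and within it the claim that the inserted-zero positions stabilize: a priori, enlarging $r$ can shift the reading index $m-\alpha_{g}^{r}(m)$, so the argument must isolate a threshold $R$, depending on $(g,m)$, beyond which both the Boolean insertion test at $m$ and the counting function $\alpha_{g}^{r}(m)$ are frozen at their limiting values. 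The finiteness of $N$ is exactly what furnishes such a threshold.
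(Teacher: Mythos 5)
Your proof is correct and takes essentially the same route as the paper's: continuity of $\iota_{r}$ via dependence on finitely many coordinates, pointwise convergence via the stabilization, for each fixed position, of the finite set of inserted-zero positions once $r$ exceeds a threshold $R$ depending on $(g,m)$, and Borel measurability of $\iota$ as a pointwise limit of continuous maps. Your coordinate-by-coordinate phrasing merely replaces the paper's cylinder-set and metric estimates, and your stabilizing set $N$ is exactly the paper's key finiteness observation.
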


\begin{proof}
For a subshift $\Sigma\subset\Omega^{(2)}$ and a word $w$ of length $\ell$ appearing in $\Sigma$ (meaning there exists some $x\in \Sigma$ such that $w = x_{1}\cdots x_{\ell}$), we will denote the cylinder set corresponding to $w$ by
 \[
 C_{w} = \{x\in \Sigma: x_{1}\cdots x_{\ell} = w\}.
 \]
Such sets provide a base for the topology on $\Sigma$.

To show that $\iota_{r}$ is continuous, let $(g,y)\in G\times\Omega^{(2)}$ and let $M \geq 1$. Let $u = y_{1}\cdots y_{M}$ be the first length $M$ subword of $y$, and consider the subset
\[
 U = \left(\{g_{1}\}\times\{g_{2}\}\times\cdots\times\{g_{r}\}\times\prod_{k > r}\Z/p_{k}^{2}\Z\right)\times C_{u}\subset G\times\Omega^{(2)}.
\]
$U$ is clearly an open subset of $G\times\Omega^{(2)}$, and it's equally clear that the sequences $\iota_{r}(g,y)$ and $\iota_{r}(h,z)$ agree to at least $M$ positions for any $(h,z)\in U$. As $M$ was arbitrary, this proves that $\iota_{r}$ is continuous. 

 We now prove the pointwise everywhere convergence $\iota_{r}\to\iota$. Let $(g,y)\in G\times\Omega^{(2)}$, and fix a word $w$ of length $\ell$ appearing in $X$. We will show that $\iota(g,y)\in C_{w}$ if and only if $\iota_{r}(g,y)\in C_{w}$ for all sufficiently large $r$ (where `sufficiently large' is taken relative to $(g,y)$ since we only seek pointwise convergence). Observe that there exists an $R\geq1$ such that 
\[
 \{m\leq\ell : m\equiv g_{r}\bmod{p_{r}^{2}}\text{ for some }r\} = \{m\leq\ell : m\equiv g_{r}\bmod{p_{r}^{2}}\text{ for some }r\leq R\}.
\]
Indeed, we have 
\[
 \{r : g_{r}\equiv m\bmod{p_{r}^{2}}\text{ for some }m\leq\ell\} = \coprod_{m\leq\ell}\{r : g_{r}\equiv m\bmod{p_{r}^{2}}\}
\]
so we may choose an $r$ from each nonempty set on the right and take $R$ to be the maximum of these. Then for any $n\leq\ell$ we have 
\begin{align*}
 \alpha_{g}(n) & = \alpha_{g}^{R}(n) \\
               & = |\{m\leq n : m\equiv g_{r}\bmod{p_{r}^{2}}\text{ for some }r\leq R\}|.  
\end{align*}
Thus if $S\geq R$ we have for any $n\leq\ell$
\begin{align*}
 \iota(g,y)_{n} & = \begin{cases}
                       0 & \text{ if } n\equiv g_{r}\bmod{p_{r}^{2}}\text{ for some } r \\
                        y_{n - \alpha_{g}(n)} & \text{ if not}
                     \end{cases} \\
              & = \begin{cases}
                    0 & \text{ if } n\equiv g_{r}\bmod{p_{r}^{2}}\text{ for some } r\leq R \\
                    y_{n -  \alpha_{g}^{R}(n)} & \text{ if not}
                  \end{cases} \\
              & = \iota_{S}(g,y)_{n},
\end{align*}
and it follows that $\iota_{S}(g,y)\in C_{w}$ for $S\geq R$ if and only $\iota(g,y)\in C_{w}$, proving the claim.
\end{proof}
For any infinite sequence of integers $I = (i_p)\in\prod_{p}\{1,\dots,p^{2}-1\}$ (so $i_{p}$ is the number of residue classes omitted mod $p^{2}$) consider the set 
\[
\{x\in X: \supp(x)\mbox{ omits exactly } i_{p}\mbox{ residue classes }\bmod{p^{2}} \ \forall p\}.
\]
This set isn't invariant, since there may exist a sequence $x$ in this set and a residue class $h\bmod{p^{2}}$ for some $p$ such that $\supp(x)\cap h\bmod{p^{2}}$ is finite; then shifting $x$ finitely many times will cause $x$ to omit an extra residue class mod $p^{2}$, removing $x$ from this set. Hence, to get an invariant set, define for each prime $p$ the set of $p$-persistent sequences in $X$
\[
 S_{p} = \{x\in X : \mbox{ for all } h\in\Z/p^{2}\Z, \mbox{ if }\supp(x) \ \cap \ h(p^{2})\mbox{ isn't empty, then it is infinite}\}.
\]
Observe that $S_{p}$ is an invariant subset of $X$. To see that $S_{p}$ is Borel, observe first that
\begin{equation}
 S_{p} = \bigcap_{h\in\Z/p^{2}\Z}\{x\in X: \supp(x) \ \cap \ h(p^{2})\mbox{ is empty}\}\amalg\{x\in X: \supp(x) \ \cap \ h(p^{2})\mbox{ is infinite}\}.
\label{eq: amalg}
\end{equation}
For each $h\in\Z/p^{2}\Z$, let $N_{h} = \{x\in X : \supp(x)\cap h(p^{2})\neq\emptyset\}$. Then 
\[
 N_{h} = \bigcup_{\ell = 1}^{\infty}\coprod_{\substack{w\in\mathcal{W}_{\ell}(X) \\ \supp(w)\cap h(p^{2})\neq\emptyset}}C_{w}
\]
where $C_{w}$ is the cylinder set defined by $w$ and $\mathcal{W}_{\ell}(X)$ is the set of all words of length $\ell$ that appear in the elements of $X$. Therefore, $N_{h}$ is a Borel subset of $X$ (in fact it's open). Hence, for each $h\in\Z/p^{2}\Z$, the first set comprising the disjoint union in the term corresponding to $h$ on the right side of eq. (\ref{eq: amalg}) is Borel, since it's the complement of $N_{h}$. At the same time, the second set in this disjoint union may be described as
\[
 \{x\in X: \supp(x) \cap h(p^{2})\mbox{ is infinite}\} = \bigcap_{m=0}^{\infty}\sigma^{-mp^{2}}N_{h}.
\]
This is immediate from the fact that $\sigma^{mp^{2}}x\in N_{h}$ whenever $h+Mp^{2}\in\supp(x)$ for some $M\geq m$. This intersection is clearly a Borel set, so $S_{p}$ is Borel as well. 

Now set
\begin{align*}
 X_{I} & = \{x\in X: \supp(x)\mbox{ omits exactly } i_{p}\mbox{ residue classes }\bmod{p^{2}} \mbox{ for all } p \\
   & \hspace{3cm}\mbox{ and } x\in S_{p} \mbox{ for all } p\}.
\end{align*}
Then this is an invariant subset of $X$. It's also Borel because
\[
  X_{I} = \bigcap_{p}\{x\in X : x\mbox{ omits exactly } i_{p}\mbox{ residue classes }\bmod{p^{2}}\}\cap S_{p}.
\]
We showed above that $S_{p}$ is Borel; at the same time we have for any $p$
\begin{align*}
 & \{x\in X : x\mbox{ omits exactly } i_{p}\mbox{ residue classes }\bmod{p^{2}}\} = \\
 & \hspace{2cm} \bigcup_{L\geq 1}\bigcap_{\ell\geq L}\coprod_{\substack{w\in W_{\ell}(X) \\ w\text{ omits exactly} \\ i_{p}\text{ residue classes }\bmod{p^{2}}}}C_{w}.
\end{align*}
This is clearly a Borel set, and it follows that $X_{I}$ is Borel.

We will denote $X_{1} = X_{(1,1,1,\dots)}$. Let $\mathcal{C}_{I} = \overline{X_{I}}$. Then $\mathcal{C}_{I}$ is a closed subshift of $X$. Also, for any $r\geq 1$ and $(i_{1},\dots,i_{r})\in\prod_{k=1}^{r}\{1,\dots,p_{k}^{2}-1\} $, define
\begin{align*}
 X_{(i_{1},\dots,i_{r})}(r) & = \{x\in X(r): \supp(x)\mbox{ omits exactly } i_{k}\mbox{ residue classes }\bmod{p_{k}^{2}} \mbox{ for } k=1,\dots, r \\
 & \hspace{3cm}\mbox{ and }x\in S_{p_{k}}\mbox{ for } k = 1,\dots,r\},
\end{align*}
and let $C_{(i_{1},\dots,i_{r})}(r) = \overline{X_{(i_{1},\dots,i_{r})}(r)}$. Note that for an infinite sequence $I = (i_{1},i_{2},\dots)$, we have $C_{I} = \cap_{r=1}^{\infty}C_{(i_{1},\dots,i_{r})}(r)$. By the Chinese remainder theorem we have the inequalities
\[
2^{m\prod_{k=1}^{r}\left(p_{k}^{2} - i_{k}\right)} \leq u^{I}_{mp_{1}^{2}p_{2}^{2}\cdots p_{r}^{2}} \leq \binom{b_{1}}{i_{1}}\binom{b_{2}}{i_2}\cdots\binom{p_{r}^{2}}{i_{r}}2^{m\prod_{k=1}^{r}\left(p_{k}^{2} - i_{k}\right)}, 
\]
where $u^{I}_{mp_{1}^{2}p_{2}^{2}\cdots p_{r}^{2}}$ is the number of distinct words of length $mp_{1}^{2}p_{2}^{2}\cdots p_{r}^{2}$ appearing in elements of $X_{I}(r)$. Therefore
\[
h_{\text{top}}(\mathcal{C}_{I}(r)) = \log 2\prod_{k=1}^{r}\left(1 - \displaystyle\frac{i_{k}}{p_{k}^{2}}\right)
\]
and so
\begin{align*}
h_{\text{top}}(\mathcal{C}_{I}) &= \lim_{r\to\infty}h_{\text{top}}(\mathcal{C}_{(i_{1},\dots,i_{r})}(r))\\
&= \lim_{r\to\infty}\log 2\prod_{k\leq r}\left(1 - \frac{i_{k}}{p_{k}^{2}}\right)\\
&=\log 2\prod_{p}\left(1 - \frac{i_{p}}{p^{2}}\right).
\end{align*}

\begin{lem}
Let $\nu$ be an ergodic measure on $X$. Then there exists a unique sequence $I\in\prod_{p}\{1,\dots,p^{2}-1\}$ such that $\nu(X_{I}) = 1$. In particular, any ergodic measure of maximal entropy on $X$ is concentrated on $X_{1}$.   
\label{lem: mme support}
\end{lem}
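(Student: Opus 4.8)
The plan is to classify, for each fixed prime $p$, the residue classes $h \bmod p^2$ according to how they meet the support of a sequence $x \in X$: the intersection $\supp(x)\cap h(p^2)$ is either empty, finite and nonempty, or infinite. Writing $j_p(x)$, $k_p(x)$, $\ell_p(x)$ for the number of classes of each respective type, so that $j_p+k_p+\ell_p = p^2$, the whole argument turns on how these counting functions behave under the shift. The persistence set is exactly $S_p = \{k_p = 0\}$, and on $S_p$ the number of omitted classes is just $j_p = p^2 - \ell_p$, so it suffices to control $k_p$ and $\ell_p$.

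First I would record the shift dynamics of these counts. Since $n \in \supp(\sigma x)\cap h(p^2)$ iff $n+1 \in \supp(x)\cap(h+1)(p^2)$, the sets $\supp(\sigma x)\cap h(p^2)$ and $\supp(x)\cap (h+1)(p^2)$ differ by at most the single element lost at position $1$; hence the correspondence $h \leftrightarrow h+1$ is a bijection of classes preserving infiniteness exactly, giving $\ell_p(\sigma x) = \ell_p(x)$ for every $x$. Thus $\ell_p$ is a genuinely $\sigma$-invariant function. The same correspondence shows that an empty class stays empty and an infinite class stays infinite, so a finite-nonempty class of $\sigma x$ can only descend from a finite-nonempty class of $x$; consequently $k_p(\sigma x)\le k_p(x)$. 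Moreover the support elements lying in finite classes form a finite subset of $\mathbb{N}$, so once we shift past the largest of them every such class is empty: $k_p(\sigma^n x)\to 0$ as $n\to\infty$, for every $x$.

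With these two facts the lemma follows quickly. Since $0\le k_p\le p^2$ and $\nu$ is $\sigma$-invariant we have $\int k_p\circ\sigma\,d\nu = \int k_p\,d\nu$, whence $\int(k_p - k_p\circ\sigma)\,d\nu = 0$ with a nonnegative integrand, so $k_p = k_p\circ\sigma$ holds $\nu$-a.e. Being a.e.\ invariant under the ergodic map $\sigma$, $k_p$ is $\nu$-a.e.\ equal to a constant, and the pointwise decay $k_p(\sigma^n x)\to 0$ forces this constant to be $0$; thus $\nu(S_p) = 1$, and intersecting over all $p$ gives $\nu\big(\bigcap_p S_p\big) = 1$. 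Likewise $\ell_p$, being invariant, is $\nu$-a.e.\ a constant by ergodicity, and on $S_p$ the number of omitted (empty) classes equals $i_p := p^2-\ell_p$. Admissibility forces at least one empty class, so $i_p\ge 1$; if $\ell_p = 0$ for some $p$ then, since a.e.\ point lies in $S_p$ and so has only empty or infinite classes, a.e.\ support is empty and $\nu = \delta_0$ (the point mass at the empty sequence, which has entropy $0$ and so is irrelevant below), while otherwise $i_p\le p^2-1$. This yields the sequence $I = (i_p)\in\prod_p\{1,\dots,p^2-1\}$ with $\nu(X_I)=1$; uniqueness is immediate because distinct $X_I$ are disjoint. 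Finally, for the last assertion, $\nu$ supported on $X_I\subseteq\mathcal{C}_I$ gives $h_\nu(\sigma)\le h_{\text{top}}(\mathcal{C}_I) = \log2\prod_p(1-i_p/p^2)$, which is strictly smaller than $\log2\prod_p(1-1/p^2) = (6/\pi^2)\log2 = h_{\text{top}}(X)$ unless $I=(1,1,1,\dots)$; so a measure of maximal entropy must have $I=(1,1,1,\dots)$ and hence be supported on $X_1$.

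The main obstacle is the persistence step $\nu(S_p)=1$, i.e.\ showing that $\nu$-almost no sequence has a residue class meeting its support in a finite nonempty set. The crucial observation is the combination of the orbitwise monotonicity $k_p\circ\sigma\le k_p$ with the pointwise decay $k_p(\sigma^n x)\to 0$: together with invariance of $\nu$ these squeeze $k_p$ to zero almost everywhere. Everything else—the exact shift-invariance of $\ell_p$, the reduction to $X_1$ via the entropy formula—is comparatively routine once the counts are in hand.
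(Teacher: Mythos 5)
Your proof is correct, and while it follows the same overall strategy as the paper's proof --- for each prime $p$, force the count of omitted residue classes $\bmod{\,p^{2}}$ to be almost-everywhere constant by ergodicity, then intersect over the countably many primes --- your execution is genuinely more careful and in fact repairs real gaps in the paper's own argument. The paper sets $Y_{i}(p) = \{y : i_{p}(y) = i\}$ and asserts these sets are invariant; taken literally this is false, since the number of omitted classes can \emph{increase} under the shift when some class meets the support in a finite nonempty set (exactly the phenomenon the paper itself points out when motivating the persistence sets $S_{p}$), and the paper never verifies the condition $\nu(S_{p}) = 1$ that the definition of $X_{I}$ requires. Your monotonicity-plus-invariance squeeze --- $k_{p}\circ\sigma \leq k_{p}$ pointwise, $\int (k_{p} - k_{p}\circ\sigma)\,d\nu = 0$, hence $k_{p}$ is a.e.\ shift-invariant, hence a.e.\ constant by ergodicity, and the pointwise decay $k_{p}(\sigma^{n}x)\to 0$ forces the constant to be zero --- is precisely the missing step: it yields persistence, after which the exactly invariant count $\ell_{p}$ pins down $i_{p} = p^{2}-\ell_{p}$ a.e. Two further points in your favor: you notice the edge case $\nu = \delta_{0}$, the point mass at the all-zeros sequence, which is an ergodic measure on $X$ supported on no $X_{I}$ with $I\in\prod_{p}\{1,\dots,p^{2}-1\}$ (it would need $i_{p} = p^{2}$); the lemma as stated, and the paper's decomposition $Y = \coprod_{i=1}^{p^{2}-1}Y_{i}(p)$, silently exclude it, and your handling shows it is harmless for the maximal-entropy conclusion. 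You also supply the strict-inequality entropy comparison $h_{\text{top}}(\mathcal{C}_{I}) < h_{\text{top}}(\mathcal{C}_{1})$ for $I\neq(1,1,\dots)$ needed for the ``in particular'' clause, which the paper leaves implicit in the remark preceding the lemma.
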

\begin{proof}
Although one might be tempted to reach the desired conclusion immediately from the fact that $X$ is the disjoint union of the $X_{I}$ over all sequences $I$, this is invalid since there are uncountably many such sequences, so we must argue differently. Let $\nu$ be an ergodic measure on $X$, and let $Y = \supp(\nu)$. For each $y\in Y$ and each $p$ let $i_{p}(y) = p^{2} - |\pi_{p^{2}}(\supp(y))|$. Then $i_{p}:Y\to\mathbb{N}$ is measurable, and we have for any $p$
\[
Y = \coprod_{i=1}^{p^{2} - 1}Y_{i}(p)
\]
where $Y_{i}(p) = \{y\in Y : i_{p}(y) = i\}$. Since $\nu$ is ergodic with $\nu(Y)=1$, and the $Y_{i}(p)$ are invariant and mutually disjoint, there exists a unique $i(p)\in\{1,\dots,p^{2} - 1\}$ such that $\nu(Y_{i(p)}) = 1$. As this is true for all $p$, and since a countable intersection of sets of full measure also has full measure, we have $\nu(Y\cap X_{I}) = \nu\left(\cap_{k}Y_{i(p)}\right) = 1$ where $I = (i(p))_{p}$, which proves the claim.
\end{proof}

Consider the map $\rho: X_{1}\to G$ given by sending a sequence $x$ to the sequence $g_{x}\in G$, where $(g_{x})_{p}$ is the unique residue class omitted by the support of $x\bmod{p^{2}}$. $\rho$ is measurable; indeed, if we fix finitely many primes $p_{1},\dots,p_{r}$ and classes $g_{i}\bmod{p_{i}^{2}}$ for $i=1,\dots, r$, then the inverse image under $\rho$ of the open set
\[
 \{g_{1}\}\times\{g_{2}\}\times\cdots\times\{g_{r}\}\times\prod_{p > p_{r}}\Z/p^{2}\Z
\]
is
\[
 X_{1}\cap\bigcap_{\ell\geq 1}\coprod_{\substack{w\in W_{\ell}(X) \\ \supp(w)\text{ omits } g_{i}\bmod{p_{i}^{2}} \\ \text{ for } i=1,\dots,r}}C_{w}
\]
which is clearly a Borel set.

Let 
\[
 Y = \{x\in X_{1} : g_{x}\in G_{1}^{c}\} = \rho^{-1}(G_{1}^{c}).
\]
$Y$ is Borel (since it's the inverse image of the closed set $G_{1}^{c}$ under the Borel map $\rho$) and recurrent. Indeed, any sequence in $Y$ must contain infinitely many 1's by virtue of belonging to $X_{1}$. Therefore, if one shifts such a sequence sufficiently many times so that there is a 1 in its first position, then there can't be any prime $p$ such that this shifted sequence omits $1\bmod{p^{2}}$ from its support.

Let $\sigma_{Y}: Y\to Y$ be the transformation induced by the shift $\sigma$ on $X$, as defined in Section \ref{sec:induced}. 
\begin{prop}
 $h(Y,\sigma_{Y}) = \log2$, where $h(Y,\sigma_{Y})$ is defined by eq. \ref{eq: entropy}.
 \label{prop: Y entropy}
\end{prop}
\begin{proof}
 Let $\nu$ be a $\sigma_{Y}$-ergodic Borel probability measure on $Y$. By fact (3), there exists a $\sigma$-ergodic Borel probability measure $\mu$ on $X$ such that $\nu = \mu_{Y}$. Since $\nu(Y) = 1$, we have $\mu(Y) > 0$, so by ergodicity, we have $\mu(X_{1}) = 1$ since $Y\subset X_{1}$ and $X_{1}$ is shift invariant. Therefore, $\rho_{*}\mu$ is a $T_{1}$-invariant probability measure on $G$, so by the unique ergodicity of $T_{1}$ it follows that $\rho_{*}\mu = m$, the mass one Haar measure on $G$. But we have $Y = \rho^{-1}(G_{1}^{c})$, so $\mu(Y) = \mu(\rho^{-1}(G_{1}^{c})) = m(G_{1}^{c})$. Observe that 
\[
 G_{1}^{c} = \prod_{p} (\Z/p^{2}\Z)\backslash\{1\}
\]
and therefore
\begin{align*}
 m(G_{1}^{c}) & = \lim_{K\to\infty}\frac{1}{p_{1}^{2}\cdots p_{K}^{2}}\prod_{k=1}^{K}(p_{k}^{2} - 1) \\
 & = \prod_{p}\left(1 - \frac{1}{p^{2}}\right).
\end{align*}
So we find
\begin{equation}
 \mu(Y) = \prod_{p}\left(1 - \frac{1}{p^{2}}\right) 
\label{eq: Y meas}
\end{equation}
for any ergodic measure $\mu$ on $X$ with $\mu(X_{1}) = 1$. Now, by Abramov's formula (fact (2) in Section \ref{sec:induced}) we have
\[
 h_{\mu_{Y}}(\sigma_{Y})\mu(Y) = h_{\mu}(\sigma) \leq h_{top}(X) = \log 2 \prod_{p}\left(1 - \frac{1}{p^{2}}\right)
\]
so by (\ref{eq: Y meas}), this yields $h_{\mu_{Y}}(\sigma_{Y})\leq \log 2$. On the other hand, there exists a measure of maximal entropy for $X$ since it is a subshift system, and this induces a measure of entropy $\log 2$ for $\sigma_{Y}$ by Abramov's formula once again; therefore $h(Y,\sigma_{Y}) = \log 2$ as claimed.
\end{proof}
Now, consider the product $G\times\Omega^{(2)}$ as before, with the skew-transformation $T$. We would like to induce $T$ to the subset $G_{1}^{c}\times\Omega^{(2)}$, but this is not strictly in keeping with our definitions since $G_{1}^{c}$ isn't a recurrent subset of $G$. However, observe by Poincar\'{e} recurrence that since the system $(G,T_{1},m)$ is ergodic and $m(G_{1}^{c})> 0$, there is a recurrent subset $R_{1}\subseteq G_{1}^{c}$ with $m(R_{1}) = m(G_{1}^{c})$, so we may unambiguously define the induced transformation $\overline{T} : R_{1}\times\Omega^{(2)}\to R_{1}\times\Omega^{(2)}$. Since $(G,T_{1})$ is uniquely ergodic, any $T$-invariant measure on $G\times \Omega^{(2)}$ must project to $m$ on the first factor. Therefore, for any $T$-invariant measure $\mu$ on $G\times\Omega^{(2)}$, the induced transformation $\overline{T} : G_{1}^{c}\times\Omega^{(2)}\to G_{1}^{c}\times\Omega^{(2)}$ is well-defined $\mu$-almost everyhwere. 

Since this introduces yet another system into our argument, we diagram the relations between the various systems of interest, where the diagram below is commutative:
\begin{align*}
 \sigma & : X \to X \\
 T & : G\times\Omega^{(2)} \to G \times \Omega^{(2)} & T(g,y) = (T_{1}g, \sigma^{\chi_{G_{1}^{c}}(g)}y) \\
 \iota & : G\times\Omega^{(2)} \to X & \mbox{ insertion map} \\
 \sigma_{Y} & : Y \to  Y & \mbox{ induced transformation of $\sigma$ to $Y \subset X$} \\
 \overline{T} & : G_{1}^{c}\times \Omega^{(2)}\to G_{1}^{c}\times\Omega^{(2)} & \mbox{ induced transformation of $T$} \\
 \overline{\iota} & : G_{1}^{c}\times\Omega^{(2)} \to X & \mbox{ restriction of $\iota$ to $G_{1}^{c}\times\Omega^{(2)}$} \\
 \overline{\iota}|_{W} & : W \to Y, & \mbox{ restriction of $\overline{\iota}$ to $W = \overline{\iota}^{-1}(Y)$}. 
\end{align*}
\begin{center}
\begin{tikzcd}
\overline{\iota}^{-1}(Y) = W \arrow{r}{\overline{T}_{W}} \arrow{d}{\overline{\iota}} 
& W \arrow{d}{\overline{\iota}} \arrow[hookrightarrow]{r}
& G\times\Omega^{(2)} \arrow{r}{T} \arrow{d}{\iota}
& G\times\Omega^{(2)} \arrow{d}{\iota}
\\
Y \arrow{r}{\sigma_{Y}}
& Y \arrow[hookrightarrow]{r}
& X \arrow{r}{\sigma} 
& X 
 
\end{tikzcd}
\end{center}

\begin{prop}
For any $T$-invariant measure $\mu$ on $G\times\Omega^{(2)}$, we have $\overline{T} = \overline{T_{1}}\times\sigma \ \mu$-a.e., where $\overline{T_{1}}:G_{1}^{c} \to G_{1}^{c}$ is induced from $T_{1}$. 
\label{prop: product}
\end{prop}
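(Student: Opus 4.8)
The plan is to unwind the definition of the induced transformation directly, exploiting the fact that the cocycle defining the skew product $T$ is supported exactly on the set to which we are inducing. Recall from (\ref{eq: skew prod}) that $T(g,y) = (T_{1}g, \sigma^{\psi(g)}y)$ with $\psi = 1 - \chi_{G_{1}} = \chi_{G_{1}^{c}}$. First I would observe that membership of a point $T^{n}(g,y)$ in $G_{1}^{c}\times\Omega^{(2)}$ depends only on its first coordinate $T_{1}^{n}g$; consequently the first return time of $(g,y)$ to $G_{1}^{c}\times\Omega^{(2)}$ under $T$ coincides with the first return time $r(g) = \min\{n\geq1 : T_{1}^{n}g\in G_{1}^{c}\}$ of $g$ to $G_{1}^{c}$ under $T_{1}$, which is precisely the return time defining $\overline{T_{1}}$. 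Iterating (\ref{eq: skew prod}) gives $T^{n}(g,y) = (T_{1}^{n}g, \sigma^{S_{n}(g)}y)$, where $S_{n}(g) = \sum_{k=0}^{n-1}\psi(T_{1}^{k}g)$ is the Birkhoff sum of $\psi$ along the $T_{1}$-orbit of $g$. Thus $\overline{T}(g,y) = T^{r(g)}(g,y) = (\overline{T_{1}}g,\ \sigma^{S_{r(g)}(g)}y)$, and everything reduces to evaluating $S_{r(g)}(g)$.

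The heart of the argument is then the observation that $S_{r(g)}(g) = 1$ for every $g\in G_{1}^{c}$. Indeed, since $\psi = \chi_{G_{1}^{c}}$, the $k=0$ term equals $\psi(g) = 1$ because $g\in G_{1}^{c}$; and for each $k$ with $1\leq k\leq r(g)-1$ the minimality of the return time forces $T_{1}^{k}g\notin G_{1}^{c}$, i.e. $T_{1}^{k}g\in G_{1}$, so that $\psi(T_{1}^{k}g) = 0$. Hence exactly one shift is accumulated over a single excursion from $G_{1}^{c}$, and $\overline{T}(g,y) = (\overline{T_{1}}g,\ \sigma y) = (\overline{T_{1}}\times\sigma)(g,y)$, as claimed.

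Finally I would attend to the $\mu$-a.e. qualifier. The only thing that can fail is the finiteness of $r(g)$, i.e. the recurrence of $g$ to $G_{1}^{c}$. Since $(G,T_{1})$ is uniquely ergodic with invariant measure the Haar measure $m$, any $T$-invariant $\mu$ must project to $m$ on the first factor; as $m(G_{1}^{c}) = \prod_{p}(1 - p^{-2}) > 0$, Poincar\'{e} recurrence guarantees that $r(g) < \infty$ for $m$-a.e. $g$, hence for $\mu$-a.e. $(g,y)$, and on this set of full measure the computation above applies verbatim. I do not expect a serious obstacle here: once one notices that $\psi$ is the indicator of precisely the inducing set $G_{1}^{c}$, the identity $\overline{T} = \overline{T_{1}}\times\sigma$ is essentially forced. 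The one point demanding care is the bookkeeping that the return time to the product set is governed solely by the base coordinate; this is exactly what decouples the two factors and converts the skew product into a genuine direct product after inducing.
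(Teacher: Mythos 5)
Your proposal is correct and takes essentially the same approach as the paper: both arguments identify the return time of $(g,y)$ to $G_{1}^{c}\times\Omega^{(2)}$ with the base return time $r_{1}(g)$ (since membership depends only on the first coordinate), observe that exactly one shift accumulates per excursion because $\psi = \chi_{G_{1}^{c}}$ vanishes on the intermediate points $T_{1}^{k}g\in G_{1}$, and dispose of the $\mu$-a.e.\ qualifier via unique ergodicity of $(G,T_{1})$ and Poincar\'{e} recurrence applied to $G_{1}^{c}$. The only cosmetic difference is that you package the computation as a Birkhoff sum $S_{n}(g)=\sum_{k=0}^{n-1}\psi(T_{1}^{k}g)$, whereas the paper performs the same bookkeeping by a direct step-by-step iteration of $T$.
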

\begin{proof}
 Let $(g,y)\in G_{1}^{c}\times\Omega^{(2)}$. By definition, we have $\T(g,y) = T^{r'(g,y)}(g,y)$ where $r' : G_{1}^{c}\times\Omega^{(2)}\to\mathbb{N}$ is the return-time function. Now, if $r'(g,y) < \infty$ (this is the case for $\mu$-a.e. $(g,y)$ by the above), we have
 \begin{align*}
  r'(g,y) & = \min\{n\geq 1: T^{n}(g,y)\in G_{1}^{c}\times\Omega^{(2)}\} \\
  & = \min\{n\geq 1: T_{1}^{n}(g)\in G_{1}^{c}\} \\
  & = r_{1}(g)
 \end{align*}
where $r_{1} : G_{1}^{c}\to\mathbb{N}$ is the return-time function (which is well-defined for $m$-a.e. $g\in G_{1}^{c}$). Observe that for $g\in G_{1}^{c}$ with $r_{1}(g) < \infty$ and $1\leq n < r_{1}(g)$, we have $T_{1}^{n}g\in G_{1}$, and therefore $T^{n}(g,y) = (T_{1}^{n}g,\sigma y)$ by (\ref{eq: skew prod}). In particular, $T^{r_{1}(g)-1}(g,y) = (T_{1}^{r_{1}(g)-1}g,\sigma y)$ and $T_{1}^{r_{1}(g)-1}g\in G_{1}$, so we have 
\[
\T(g,y) = T^{r_{1}(g)}(g,y) = (T_{1}^{r_{1}(g)}g,\sigma y) = (\T_{1}\times\sigma)(g,y),
\]
which proves the claim.
\end{proof}

\begin{prop}
$ h(G_{1}^{c}\times\Omega^{(2)},\overline{T}) = \log 2$.
\label{prop: induced entropy}
\end{prop}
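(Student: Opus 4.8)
The plan is to leverage Proposition \ref{prop: product}, which identifies the induced transformation $\overline{T}$ with the direct product $\overline{T_1}\times\sigma$ almost everywhere, and then compute the supremal entropy of this product directly. First I would make the reduction precise: by fact (3) of Section \ref{sec:induced}, every $\overline{T}$-invariant Borel probability measure $\nu$ on $G_1^c\times\Omega^{(2)}$ is of the form $\mu_{G_1^c\times\Omega^{(2)}}$ for some $T$-invariant $\mu$, so Proposition \ref{prop: product} guarantees that $\overline{T}=\overline{T_1}\times\sigma$ holds $\nu$-almost everywhere. Since entropy depends only on the $\nu$-a.e.\ equivalence class of a transformation, computing $h(\overline{T})$ amounts to computing the supremum of $h_\nu(\overline{T_1}\times\sigma)$ over invariant $\nu$. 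I would then record two properties of the base factor $(G_1^c,\overline{T_1})$: it is uniquely ergodic, since unique ergodicity descends from that of $(G,T_1)$ through the measure bijection of Section \ref{sec:induced}, its unique invariant measure being $m_{G_1^c} := m|_{G_1^c}/m(G_1^c)$; and it has zero entropy, since Abramov's formula (fact (2)) gives $h_{m_{G_1^c}}(\overline{T_1})\,m(G_1^c)=h_m(T_1)=0$, the right-hand side vanishing because $(G,T_1,m)$ is a translation on a compact abelian group.

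For the upper bound, let $\nu$ be any $\overline{T}$-invariant measure. Its image under the coordinate projection $\pi_1:G_1^c\times\Omega^{(2)}\to G_1^c$ is $\overline{T_1}$-invariant, hence equals $m_{G_1^c}$ by unique ergodicity, while its image under $\pi_2$ is a $\sigma$-invariant measure on $\Omega^{(2)}$. Because the two coordinate $\sigma$-algebras generate the Borel structure of the product, subadditivity of the Kolmogorov--Sinai entropy over these two factors yields
\[
h_\nu(\overline{T}) \;=\; h_\nu(\overline{T_1}\times\sigma) \;\leq\; h_{m_{G_1^c}}(\overline{T_1}) + h_{(\pi_2)_*\nu}(\sigma) \;\leq\; 0 + \log 2,
\]
the final inequality being the variational bound for the full shift, whose topological entropy is $\log 2$. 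Taking the supremum over $\nu$ gives $h(\overline{T})\leq\log 2$.

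For the lower bound I would exhibit a single invariant measure attaining $\log 2$, namely the product $\nu_0 = m_{G_1^c}\times\beta$, where $\beta$ is the $(1/2,1/2)$-Bernoulli measure on $\Omega^{(2)}$. Since $\overline{T}$ and $\overline{T_1}\times\sigma$ agree on the recurrent set $R_1\times\Omega^{(2)}$, which carries full $\nu_0$-measure because $m(R_1)=m(G_1^c)$, and the product transformation preserves the product measure, a direct check shows $\nu_0$ is $\overline{T}$-invariant; moreover $h_{\nu_0}(\overline{T})=h_{\nu_0}(\overline{T_1}\times\sigma)=h_{m_{G_1^c}}(\overline{T_1})+h_\beta(\sigma)=0+\log 2$ by additivity of entropy for product measures. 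Combining the two bounds yields $h(\overline{T})=\log 2$. I expect the only genuinely delicate point to be the bookkeeping around the ``almost everywhere'' nature of Proposition \ref{prop: product} together with the non-recurrence of $G_1^c$ itself: one must consistently restrict attention to the full-measure recurrent subset $R_1\times\Omega^{(2)}$, so that the product structure, the invariance of $\nu_0$, and the a.e.\ identification of entropies are all simultaneously valid. No single step, however, is computationally heavy.
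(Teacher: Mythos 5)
Your proposal is correct and follows essentially the same route as the paper's proof: both reduce via Proposition \ref{prop: product} to the direct product $\overline{T_{1}}\times\sigma$, obtain the upper bound $h_{\nu}(\overline{T})\leq h_{\overline{m}}(\overline{T_{1}})+h_{\eta}(\sigma)\leq\log 2$ from unique ergodicity and zero entropy of the base together with subadditivity over the two factors, and attain the bound with the product measure $\overline{m}\times\omega_{(1/2,1/2)}$. The extra bookkeeping you supply (lifting $\nu$ to a $T$-invariant measure via fact (3), deducing unique ergodicity and zero entropy of $\overline{T_{1}}$ from the induced-measure bijection and Abramov's formula, and restricting to the recurrent set $R_{1}$) merely makes explicit steps the paper leaves implicit.
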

\begin{proof}
Let $\nu$ be a $\overline{T}$-invariant Borel probability measure on $G_{1}^{c}\times\Omega^{(2)}$. By Proposition \ref{prop: product}, we have $\overline{T} = \overline{T_{1}}\times\sigma \ \nu$-a.e. Hence if we let $\eta = (\pi_{2})_{*}\nu$, then $\eta$ is a $\sigma$-invariant measure on $\Omega^{(2)}$, and by the unique ergodicity of $\overline{T_{1}}, \ \nu$ projects to $\overline{m}$ on the first factor. Therefore, we have the inequality (cf. \cite{Dow} Fact 4.4.3)
\[
 h_{\nu}(\overline{T})\leq h_{\overline{m}}(\overline{T_{1}}) + h_{\eta}(\Omega^{(2)}) \leq \log 2,
\]
since $h_{\overline{m}}(\overline{T}_{1}) = 0$. At the same time, if we let $\nu = \overline{m}\times\omega_{(1/2,1/2)}$, where $\omega_{(1/2,1/2)}$ is the maximum entropy Bernoulli measure on $\Omega^{(2)}$, then
\[
 h_{\nu}(\overline{T}) = h_{\overline{m}}(\overline{T_{1}}) + h_{\omega_{(1/2,1/2)}}(\Omega^{(2)}) = \log 2,
\]
so the upper bound is achieved, proving the claim.

\end{proof}

We may define a map $\phi : Y\to G_{1}^{c}\times\Omega^{(2)}$ by $x\mapsto (g_{x}, \widehat{x})$, where $g_{x}\in G_{1}^{c}$ is the unique sequence of residue classes omitted by the support of $x$ modulo the $p^{2}$ (this belongs to $G_{1}^{c}$ by definition of $Y$) and $\widehat{x}\in\Omega^{(2)}$ is obtained by removing from $x$ the zeros along these residue classes. 
\begin{prop}
 For any $x\in Y$, we have $r_{1}(g_{x}) < \infty$.
\end{prop}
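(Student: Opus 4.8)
The plan is to unwind the definition of the return-time function $r_{1}$ and connect it directly to the positions of the $1$'s in $x$. Recall that $r_{1}(g) = \min\{n\geq 1 : T_{1}^{n}g\in G_{1}^{c}\}$, and since $(T_{1}^{n}g)_{p} = g_{p} - n\bmod{p^{2}}$, the condition $T_{1}^{n}g_{x}\in G_{1}^{c}$ is equivalent to requiring $(g_{x})_{p} - n\not\equiv 1\bmod{p^{2}}$ for every prime $p$, i.e. $(g_{x})_{p}\not\equiv n+1\bmod{p^{2}}$ for all $p$. Writing $m = n+1$, finiteness of $r_{1}(g_{x})$ amounts to the existence of an integer $m\geq 2$ lying outside every omitted residue class of $\supp(x)$, that is, with $m\not\equiv (g_{x})_{p}\bmod{p^{2}}$ for all $p$.

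The key observation is that every element of $\supp(x)$ furnishes such an $m$. By definition $(g_{x})_{p}$ is the unique residue class modulo $p^{2}$ that $\supp(x)$ omits, so if $m\in\supp(x)$ then the class of $m$ modulo $p^{2}$ meets $\supp(x)$ and hence cannot be the omitted one; thus $m\not\equiv (g_{x})_{p}\bmod{p^{2}}$ for every $p$. It therefore suffices to produce an element of $\supp(x)$ that is at least $2$.

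This last point follows from the structure of $Y$. Since $x\in X_{1}$, its support hits $p^{2}-1\geq 3$ residue classes modulo $4$ (taking $p=2$), and the persistence condition $x\in S_{2}$ forces each such class to be hit infinitely often, so $\supp(x)$ is infinite; this is precisely the observation already used to establish that $Y$ is recurrent. Choosing any $m\in\supp(x)$ with $m\geq 2$ and setting $n = m-1\geq 1$, the preceding paragraph gives $T_{1}^{m-1}g_{x}\in G_{1}^{c}$, whence $r_{1}(g_{x})\leq m-1 < \infty$.

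I do not expect a substantial obstacle here: the proposition is essentially a bookkeeping exercise translating the dynamics of $T_{1}$ on $G_{1}^{c}$ into the combinatorics of the omitted residue classes. The only points requiring a little care are the index shift between $n$ and $m = n+1$ (so that position $1$, which is automatically free because $g_{x}\in G_{1}^{c}$, does not by itself yield a valid return time), and the appeal to persistence to guarantee that a free position strictly larger than $1$ exists.
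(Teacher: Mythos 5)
Your proof is correct and is essentially the contrapositive of the paper's own argument: the paper assumes $r_{1}(g_{x})=\infty$, deduces that every $n\geq 2$ lies in an omitted class so that $x$ is (essentially) the zero sequence, and contradicts $x\in X_{1}$, while you directly produce a return time from any support element $m\geq 2$, whose existence follows from persistence. The underlying identification --- that $T_{1}^{n}g_{x}\in G_{1}^{c}$ exactly when position $n+1$ avoids every omitted residue class --- is the same in both, and your version is if anything slightly more careful about the role of position $1$.
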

\begin{proof}
 If $r_{1}(g_{x}) = \infty$, then for any $n\geq 1$ there exists a prime $p$ such that $g_{\sigma^{n}x}\equiv 1\bmod{p^{2}}$. Hence, for any $n\geq 2$ there exists a $p$ such that $g_{x}\equiv n\bmod{p^{2}}$. But this clearly implies that $x = 00000\cdots$, which doesn't belong $Y$ since $Y\subset X_{1}$.   
\end{proof}
Therefore, the composite $\overline{T}\circ\phi: Y\to G_{1}^{c}\times\Omega^{(2)}$ is well-defined everywhere on $Y$, since $\overline{T}(g,y)$ is well-defined whenever $r_{1}(g) < \infty$.
\begin{prop}
$\phi\circ\sigma_{Y} = \overline{T}\circ\phi$ everywhere on $Y$. 
\label{prop: induced equivariance}
\end{prop}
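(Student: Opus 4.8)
The plan is to transport the intertwining $\iota\circ T = \sigma\circ\iota$ (first Proposition of this section) through the induced dynamics, using that $\phi$ recovers exactly the data $(g_x,\widehat{x})$ from which $\iota$ reconstructs $x$. Fix $x\in Y$ and write $\phi(x) = (g_x,\widehat{x})$, so that by the mutually inverse definitions of $\phi$ (deletion of the omitted-class zeros) and $\iota$ (insertion of those zeros) we have $x = \iota(g_x,\widehat{x})$ with $g_x\in G_1^{c}$.

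The first step is to identify the return time $r$ of $x$ to $Y$ under $\sigma$. Since $X_1$ is $\sigma$-invariant, every shift $\sigma^{n}x$ again lies in $X_1$, and a direct check—using $p$-persistence to rule out the dropped first coordinate creating a spurious omitted class—shows that the omitted-class sequence transforms as $g_{\sigma^{n}x} = T_1^{n}g_x$. Consequently $\sigma^{n}x\in Y$ if and only if $T_1^{n}g_x\in G_1^{c}$, so $r(x)$ coincides with $r_1(g_x)$, the return time of $g_x$ to $G_1^{c}$ under $T_1$; abbreviate $N = r_1(g_x)$, finite by the preceding Proposition. Thus $\sigma_Y x = \sigma^{N}x$.

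Next I would iterate the intertwining to write $\sigma_Y x = \sigma^{N}\iota(g_x,\widehat{x}) = \iota\bigl(T^{N}(g_x,\widehat{x})\bigr)$ and compute the argument from the skew-product form (\ref{eq: skew prod}). Iterating (\ref{eq: skew prod}) gives $T^{N}(g,y) = (T_1^{N}g,\sigma^{s}y)$ with $s = \sum_{j=0}^{N-1}\psi(T_1^{j}g)$. Taking $g = g_x$, the term $j=0$ contributes $\psi(g_x)=1$ because $g_x\in G_1^{c}$, while for $1\le j\le N-1$ minimality of the first-return time $N$ forces $T_1^{j}g_x\in G_1$, so $\psi(T_1^{j}g_x)=0$. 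Hence $s=1$, giving $T^{N}(g_x,\widehat{x}) = (T_1^{N}g_x,\sigma\widehat{x})$ and therefore $\sigma_Y x = \iota(T_1^{N}g_x,\sigma\widehat{x})$.

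To finish, I would read off $\phi(\sigma_Y x)$. Since $g_{\sigma_Y x} = T_1^{N}g_x$ by the first step, the inserted coordinates of $\iota(T_1^{N}g_x,\sigma\widehat{x})$ are precisely the omitted residue classes of $\sigma_Y x$; deleting them—exactly the operation defining $\phi$—returns $\sigma\widehat{x}$, so $\widehat{\sigma_Y x} = \sigma\widehat{x}$ and $\phi(\sigma_Y x) = (T_1^{N}g_x,\sigma\widehat{x})$. On the other hand, Proposition \ref{prop: product} (applicable at $(g_x,\widehat{x})$ since $r_1(g_x)<\infty$) gives $\T(\phi(x)) = (\overline{T_1}\times\sigma)(g_x,\widehat{x}) = (T_1^{N}g_x,\sigma\widehat{x})$, which matches. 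The main obstacle is the bookkeeping in the first two steps: correctly establishing $r(x)=r_1(g_x)$ (resting on the $\sigma$-invariance of $X_1$ together with $p$-persistence, so that shifting produces no extra omitted class) and verifying that exactly one genuine shift of $\widehat{x}$ occurs during one excursion of $g_x$ out of and back into $G_1^{c}$, i.e.\ that the skew exponent $s$ equals $1$.
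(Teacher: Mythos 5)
Your proof is correct, but it reaches the key identity $\widehat{\sigma_{Y}x} = \sigma(\widehat{x})$ by a genuinely different route than the paper. The paper argues by direct position bookkeeping: it writes $(\widehat{\sigma^{r(x)}x})_{m} = x_{\ell_{m}+r(x)}$ and $\widehat{x}_{m} = x_{\ell'_{m}}$ for explicitly defined index sequences $\ell_{m}, \ell'_{m}$, observes that $x$ omits the classes $n+1 \bmod p_{n}^{2}$ for $1\leq n < r(x)$ so that $\ell'_{m} > r(x)$ for $m>1$, and deduces $\ell_{m}+r(x) = \ell'_{m+1}$ by induction; it then concludes via Proposition \ref{prop: product}, exactly as you do. You instead avoid all index computations by transporting the problem through $\iota$: you use $x = \iota(g_{x},\widehat{x})$ (i.e.\ $\iota\circ\phi = \mathrm{Id}_{Y}$, a fact the paper only records \emph{after} this proposition, but which is immediate from the definitions, so there is no circularity), iterate the already-proven intertwining $\iota\circ T = \sigma\circ\iota$, and reduce everything to the skew-exponent count $s = \sum_{j=0}^{N-1}\psi(T_{1}^{j}g_{x}) = 1$ over one excursion, plus the fact that deletion along the matching class data inverts insertion. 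Your approach is more modular---it reuses the earlier commutation proposition, which already encapsulates the bookkeeping the paper redoes by hand---and it has the additional merit of making explicit two points the paper leaves implicit: the identification $r(x) = r_{1}(g_{x})$ of the return times (via $g_{\sigma^{n}x} = T_{1}^{n}g_{x}$ and the invariance of $X_{1}$, justified by $p$-persistence), and the fact that Proposition \ref{prop: product} holds pointwise wherever $r_{1}(g) < \infty$, not merely almost everywhere, which is what an ``everywhere on $Y$'' statement requires. What the paper's direct computation buys in exchange is self-containedness: it verifies the coordinate-level mechanics of the induced shift without leaning on the inverse relation between $\phi$ and $\iota$.
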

\begin{proof}
 Let $x\in Y$; then $\phi(\sigma_{Y}x) = \phi(\sigma^{r(x)}x) = (g_{\sigma^{r(x)}x}, \widehat{\sigma^{r(x)}x})$. We have $g_{\sigma^{r(x)}x} = T_{1}^{r(x)}x = \overline{T_{1}}x$ since the map $X_{1}\to G, x\mapsto g_{x}$ intertwines $\sigma$ with $T_{1}$. We claim that $\widehat{\sigma^{r(x)}x} = \sigma(\widehat{x})$, which will prove our claim in view of Proposition \ref{prop: product}. 
 
 We may calculate for $m\geq 1$
 \[
  (\widehat{\sigma^{r(x)}x})_{m}  = (\sigma^{r(x)}x)_{\ell_{m}} = x_{\ell_{m} + r(x)},
 \]
where the sequence $\ell_{m}$ is defined for $m\geq 1$ by
\begin{gather*}
 \ell_{1} = \min\{\ell\geq 1: \ell \not\equiv g_{p} - r(x)\bmod{p^{2}} \ \forall p\} \\
 \ell_{m + 1}  = \min\{\ell > \ell_{m} : \ell\not\equiv g_{p} - r(x)\bmod{p^{2}} \ \forall p\}.
\end{gather*}
At the same time, we have
\[
 \widehat{x}_{m} = x_{\ell'_{m}},
\]
where $\ell'_{1} = 1$ and $\ell'_{m+1} = \min\{\ell > \ell'_{m} : \ell\not\equiv g_{p}\bmod{p^{2}} \ \forall p\}$. Observe that by definition of $Y$ and $r$ and the fact that $x\in Y$, for $1\leq n < r(x)$ there exists a prime $p_{n}$ such that $\sigma^{n}x$ omits the residue class $1\bmod{p_{n}^{2}}$. Therefore, $x$ itself omits the classes $n + 1\bmod{p_{n}^{2}}$ for $1\leq n < r(x)$. Thus, if $m > 1$ then $\ell'_{m} > r(x)$. It follows that $\ell_{m} + r(x) = \ell'_{m+1}$. Indeed, when $m = 1$ we have
\begin{align*}
 \ell'_{2} & = \min\{\ell > 1 : \ell\not\equiv g_{p}\bmod{p^{2}} \ \forall p\} \\
 & = \min\{\ell > r(x) : \ell\not\equiv g_{p}\bmod{p^{2}} \ \forall p\} \\
 & = r(x) + \min\{\ell\geq 1: \ell \not\equiv g_{p} - r(x)\bmod{p^{2}} \ \forall p\} \\
 & = r(x) + \ell_{1}
\end{align*}
and now for $m > 1$ it easily results from the inductive definitions of $\ell_{m}$ and $\ell'_{m}$. Therefore  
\[
 (\widehat{\sigma^{r(x)}x})_{m} = x_{\ell_{m} + r(x)} = x_{\ell'_{m+1}} = (\sigma(\widehat{x}))_{m}
\]
which proves the claim.

\end{proof}
Let $W = \overline{\iota}^{-1}(Y)$. Note that $\overline{\iota}|_{W}: W\to Y$ is bijective since $\overline{\iota}\circ\phi = Id_{Y}$. It follows that the restriction $\overline{\iota}|_{W} : W\to Y$ is a Borel isomorphism. Moreover, if $\phi(x)\in W$ (every element of $W$ may be realized this way for a unique $x\in Y$) then by Proposition \ref{prop: induced equivariance}
\[
\overline{\iota}\overline{T}\phi(x) = \overline{\iota}\phi\sigma_{Y}x  = \sigma_{Y}{x} = \sigma_{Y}\overline{\iota}\phi(x),                                                                                                                                \]                                                                                                                                                                                                                                                                                                                                                                                                                                                                                                                 
so $\overline{\iota}|_{W}$ is a measurable Borel isomorphism between the systems $(W,\overline{T}|_{W})$ and $(Y,\sigma_{Y})$:
\[
 \overline{\iota}|_{W} : (W,\overline{T}|_{W})\overset{\sim}{\to}(Y,\sigma_{Y}).
\]

In particular, $h(W,\overline{T}|_{W})= h(Y,\sigma_{Y}) = \log 2 = h(G_{1}^{c}\times\Omega^{(2)},\overline{T})$ where the second equality is Proposition \ref{prop: Y entropy} and the last equality is Proposition \ref{prop: induced entropy}. Since $(W, \overline{T}|_{W})$ is a subsystem of $(G_{1}^{c}\times\Omega^{(2)},\overline{T})$, it follows that: 
\begin{center}
 If $(G_{1}^{c}\times\Omega^{(2)},\overline{T})$ is intrinsically ergodic, then $(W, \overline{T}|_{W})$ is intrinsically ergodic.
\end{center}
Indeed, if we could find two distinct measures of maximal entropy for $\overline{T}|_{W}$, then these could be extended to distinct measures of maximal entropy for $\overline{T}$ on $G_{1}^{c}\times\Omega^{(2)}$. On the other hand, since $(W, \overline{T}|_{W})$ is Borel isomorphic to $(Y,\sigma_{Y})$, this implies the intrinsic ergodicity of the latter system. But $(Y,\sigma_{Y})$ is induced from $(X,\sigma)$. Thus, if $\nu$ is a $\sigma_{Y}$-ergodic measure of maximal entropy $\log2$ on $Y$, then by facts (1) and (3) in section \ref{sec:induced} there is a $\sigma$-ergodic measure $\mu$ on $X$ with $\nu = \mu_{Y}$, and then by Abramov's formula we have
\[
 h_{\mu}(X) = \mu(Y)h_{\nu}(\sigma_{Y}) = \mu(Y)\log 2 = \log2 \prod_{p}\left(1 - \frac{1}{p^{2}}\right) = h_{\text{top}}(X)
\]
where the second to last equality follows from (\ref{eq: Y meas}), since the fact that $\mu(Y) > 0$ implies $\mu(X_{1}) = 1$ since $\mu$ is ergodic and $Y\subset X_{1}$. This shows that the map $\mu\mapsto\mu_{Y}$ is a bijection between $\sigma$-ergodic measures of maximal entropy on $X$ (such a measure satisfies $\mu(Y) = 6/\pi^{2} > 0$ by (\ref{eq: Y meas}) and $\sigma_{Y}$-ergodic measures of maximal entropy on $Y$, and in particular intrinsic ergodicity for the two systems is equivalent. Combining this with the above observations, we see that
\begin{center}
  If $(G_{1}^{c}\times\Omega^{(2)},\overline{T})$ is intrinsically ergodic, then $(X,\sigma)$ is intrinsically ergodic.
\end{center}

\begin{prop}
 $(G_{1}^{c}\times\Omega^{(2)},\overline{T})$ is intrinsically ergodic.
\end{prop}
\begin{proof}
 By Proposition \ref{prop: product}, $\overline{T} = \overline{T_{1}}\times\sigma$ up to a set of measure zero for any invariant measure. Also, by the facts in Section \ref{sec:induced}, $\overline{T_{1}}$ is uniquely ergodic, with the zero entropy measure $\overline{m} = m_{G_{1}^{c}}$ as its unique invariant measure. Therefore, if $\mu$ is a $\overline{T}$-ergodic measure of maximal entropy $\log2$ on $G_{1}^{c}\times\Omega^{(2)}$, then its projection onto $G_{1}^{c}$ must be $(\pi_{1})_{*}\mu = \overline{m}$, while it projects to some $\sigma$-ergodic measure $\eta = (\pi_{2})_{*}\mu$ on $\Omega^{(2)}$. As in the proof of Proposition \ref{prop: induced entropy}, we have
 \[
  \log 2 = h_{\mu}(G_{1}^{c}\times\Omega^{(2)},\overline{T})\leq h_{\overline{m}}(G_{1}^{c},\overline{T_{1}}) + h_{\eta}(\Omega^{(2)},\sigma) = h_{\eta}(\Omega^{(2)},\sigma) \leq \log 2,
 \]
which shows that $h_{\eta}(\Omega^{(2)},\sigma) = \log 2$, and therefore $\eta = \omega_{(1/2,1/2)}$, the maximum entropy Bernoulli measure on $\Omega^{(2)}$. But the system $(G_{1}^{c},\overline{T_{1}},\overline{m})$ is ergodic with entropy zero, and any such system is \emph{disjoint} from the Bernoulli system $(\Omega^{(2)},\sigma,\omega_{(1/2,1/2)})$: the only invariant measure $\mu$ on the product system $(G_{1}^{c}\times\Omega^{(2)}, \overline{T}\times\sigma)$ with $(\pi_{1})_{*}\mu = \overline{m}$ and $(\pi_{2})_{*}\mu = \omega_{(1/2,1/2)}$ is the product measure $\overline{m}\times\omega_{(1/2,1/2)}$ (\cite{Fur} Thm. 1.2). As $\mu$ was chosen arbitrarily, it follows that this is the only measure of maximal entropy on $(G_{1}^{c}\times\Omega^{(2)},\overline{T})$.  

\end{proof}
It now follows from the facts in Section \ref{sec:induced} that $m\times\omega_{(1/2,1/2)}$ is the only measure of maximal entropy on the skew-product $(G\times\Omega^{(2)},T)$, which finally gives us the following.
\begin{thm}
$\iota_{*}(m\times\omega_{(1/2,1/2)})$ is the only measure of maximal entropy on $X$. 
\end{thm}

In fact, it's easy to see that the restriction of $\iota$ to the inverse image of $X_{1}$ in $G\times\Omega^{(2)}$ is bijective; since $X_{1}$ has full measure for $\eta = \iota_{*}(m\times\omega_{(1/2,1/2)})$, we see that $\iota^{-1}(X_{1})$ has full measure for $m\times\omega_{(1/2,1/2)}$, and this gives the following.

\begin{thm}
$\iota$ induces an isomorphism of measure-preserving systems
\[
\iota: (G\times\Omega^{(2)},T,m\times\omega_{(1/2,1/2)})\xrightarrow{\sim} (X,\sigma,\eta).
\]
\end{thm}


\section{Failure of the Gibbs property}\label{sec: gibbs}
An invariant probability measure $\mu$ supported on a closed subshift $X$ of the full 2-shift $\Omega^{(2)}$ is said to be a Gibbs measure if there exists a constant $c > 0$ such that for any $n\geq 1$ and any word $w$ of length $n$ appearing in $X$, we have the inequality 
\[
 \mu(C_{w}) \geq c\cdot e^{-n\cdot h_{\text{top}}(X)}
\]
where $C_{w} \subset X$ is the cylinder set defined by $w$. In other words, the quantity $e^{|w|\cdot h_{\text{top}}(X)}\mu(C_{w})$ is uniformly bounded below as $w$ varies over all words in $X$. The importance of this property is the following:
\begin{prop}[\cite{Wei} Lemma 2]
Suppose $X$ is a closed subshift of $\Omega^{(2)}$ and $\mu$ is an ergodic Gibbs measure supported on $X$ such that $h_{\mu}(X) = h_{\text{top}}(X)$. Then $\mu$ is the only measure of maximal entropy on $X$. 
\end{prop}

The unique measures of maximal entropy on subshifts of finite type and sofic systems are Gibbs measures. Our aim in this section is to show that the unique measure of maximal entropy on the squarefree flow $X$ does not possess the Gibbs property, in contrast to many well-known classes of intrinsically ergodic systems.

We have shown that the measure $\eta = \iota_{*}(m\times\omega_{(1/2,1/2)})$ is the only measure of maximal entropy on the squarefree flow $X$. This allows us to give an explicit formula for $\eta(w):= \eta(C_{w})$ for any word $w$ in $X$, in the following way. Let $G(w) = \{g\in G:  \forall p, \ g_{p}\not\in\supp(w)\bmod{p^{2}}\}$. Observe that if $C_{w}$ is the cylinder defined by $w$, then
\begin{align*}
& \iota^{-1}(C_{w}) = \{(g,y)\in G\times\Omega^{(2)} : g\in G(w) \text{ and }y_{m - \alpha_{g}(m)} = w_{m}\text{ whenever } m\leq n \\ & \hspace{6cm} \text{ and }\delta_{g}(m) = 0\}
\end{align*}
where $\delta_{g}(m) = 0$ if and only if there does not exist any prime $p$ such that $m\equiv g_{p}\bmod{p^{2}}$. Hence, by Fubini's theorem and the definition of the Bernoulli measure we have
\[
\eta(w) = \iota_{*}(m\times\omega_{(1/2,1/2)})(C_{w}) = \int_{G(w)}2^{-|\{m\leq n: \ \delta_{g}(m) = 0\}|}dm(g).
\]
By definiton, we have for $g\in G(w)$ that $\supp(w)\subseteq\{m\leq n: \ \delta_{g}(m) = 0\}$. Therefore, we have $2^{-|\{m\leq n: \ \delta_{g}(m) = 0\}|}\leq 2^{-|\supp(w)|}$ for any $g\in G(w)$, so we get $\eta(w)\leq 2^{-|\supp(w)|}m(G(w))$. For $r\geq 1$, define $G_{r}(w) = \{g\in G_{r} : g_{i}\not\in\supp(w)\bmod p_{i}^{2} \text{ for } i=1,\dots,r\}$. By expressing the Haar measure on $G$ as the limit of the counting measures on the $G_{r}$ as $r\to\infty$, we find
\[
\eta(w) \leq 2^{-|\supp(w)|}\lim_{r\to\infty}\frac{|G_{r}(w)|}{p_{1}^{2}\cdots p_{r}^{2}}.
\]
For any $r\geq 1$ we have
\[
\frac{|G_{r}(w)|}{p_{1}^{2}\cdots p_{r}^{2}} = \prod_{i=1}^{r}\left(1 - \frac{u(w,i)}{p_{i}^{2}}\right)
\]
where $u(w,i)$ is the number of residue classes $\bmod p_{i}^{2}$ defined by $\supp(w)$. Observe that if $p_{i}^{2} > n$ then $u(w,i) = |\supp(w)|$. Since every term in the product is less than 1 we get
\begin{equation}
\frac{|G_{r}(w)|}{p_{1}^{2}\cdots p_{r}^{2}} \leq \prod_{\substack{1\leq i\leq r \\ p_{i} > \sqrt{n}}}\left(1 - \frac{|\supp(w)|}{p_{i}^{2}}\right),
\end{equation}
so letting $r\to\infty$, we find
\begin{equation}
\eta(w) \leq 2^{-|\supp(w)|}\prod_{\substack{p \\ p > \sqrt{n}}}\left(1 - \frac{|\supp(w)|}{p^{2}}\right).
\label{eq: ineq}
\end{equation}
For each $n\geq 1$, let $w^{(n)}$ be the word of length $n$ appearing in $X$ defined by $w^{(n)}_{k} = \mu^{2}(k)$ for $1\leq k\leq n$. Clearly, $|\supp(w^{(n)})| = Q(n)$, the number of squarefree integers less than or equal to $n$. Therefore, the above inequality yields
\[
 \eta(w^{(n)}) \leq 2^{-Q(n)}\prod_{p>\sqrt{n}}\left(1 - \frac{Q(n)}{p^{2}}\right).
\]
We proceed to estimate the product; call it $P(n)$. Taking logs and using the Taylor series for $\log(1-x)$ gives
\begin{align*}
 \log P(n) & = \sum_{p>\sqrt{n}}\log\left(1 - \frac{Q(n)}{p^{2}}\right) \\
 & \sim -Q(n)\sum_{p >\sqrt{n}}\frac{1}{p^{2}}.
\end{align*}
By Riemann-Stieltjes integration and the prime number theorem we therefore get
\[
 \log P(n)\sim -2\frac{Q(n)}{\sqrt{n}\log n}
\]
as $n\to\infty$, so we find from the above
\[
 \eta(w^{(n)}) \ll 2^{-Q(n)}\exp\left(-2\frac{Q(n)}{\sqrt{n}\log n}\right) 
\]
as $n\to\infty$, with an absolute implied constant.

Now, write $Q(n)$ as
\[
 Q(n) = \frac{6}{\pi^{2}}n + R(n).
\]
The error term $R(n)$ satisfies (\cite{P})
\begin{equation}
 R(n) = o\left(\frac{\sqrt{n}}{(\log N)^{A}}\right) 
\label{eq: error}
\end{equation}
for any $A > 0$. Since $h_{\text{top}}(X) = (6/\pi^{2})\log 2$ we have 
\begin{align*}
 e^{n\cdot h_{\text{top}}(X)}\eta(w^{(n)}) & = 2^{6n/\pi^{2}}\eta(w^{(n)}) \\
 & \ll 2^{6n/\pi^{2}}2^{-Q(n)}\exp\left(-2\frac{Q(n)}{\sqrt{n}\log n}\right) \\
 & \ll \exp\left(-2\frac{Q(n)}{\sqrt{n}\log n} - R(n)\log2\right)
\end{align*}
and using the above expression for $Q(n)$ and the error estimate (\ref{eq: error})  we see 
\[
 e^{n\cdot h_{\text{top}}(X)}\eta(w^{(n)}) \ll \exp\left(-\frac{12}{\pi^{2}}\frac{\sqrt{n}}{\log n} + o\left(\frac{\sqrt{n}}{\log n}\right)\right).
\]
Since the exponent tends to $-\infty$ as $n\to\infty$, we see that 
\[
 e^{n\cdot h_{\text{top}}(X)}\eta(w^{(n)}) \to 0 \text{ as } n\to\infty.
\]
Since the Gibbs property precisely states that the quantity on the left is uniformly bounded below for all $n$ and $w$ with length $n$, we have shown the following.
\begin{prop}
$\eta$ is not a Gibbs measure.
\end{prop}    

In a similar vein, the above arguments show that any $\mathscr{B}$-free shift as mentioned in the introduction and described in \cite{ELR} carries a unique measure of maximal entropy (this is also proven in \cite{KLW}), and one can ask whether the Gibbs property holds for this measure. Given that the argument in this section used specific properties of the squares of primes, it's unclear whether or not this should be the case in general.

\end{document}